\newcommand{\RR}{{\mathbb{R}}}
\newcommand{\cA}{\mathcal{A}}
\newcommand{\cB}{\mathcal{B}}
\newcommand{\cF}{\mathcal{F}}
\newcommand{\cG}{\mathcal{G}}
\newcommand{\cL}{\mathcal{L}}
\newcommand{\cO}{\mathcal{O}}
\newcommand{\sfE}{\mathsf{E}}
\newcommand{\sfG}{\mathsf{G}}
\newcommand{\sfS}{\mathsf{S}}
\newcommand{\scL}{\mathscr{L}}
\newcommand{\fg}{\mathfrak{g}}
\newcommand{\fX}{\mathfrak{X}}
\newcommand{\sfGL}{\mathsf{GL}}
\newcommand{\Lie}{\mathsf{Lie}}
\newcommand{\End}{\mathsf{End}}
\DeclareMathOperator{\Ad}{Ad}
\DeclareMathOperator{\rank}{rank}
\renewcommand{\d}[1]{\ensuremath{\operatorname{d}\!{#1}{}}}
\newcommand{\s}[1]{\ensuremath{\operatorname{s}\!{#1}{}}}
\renewcommand{\t}[1]{\ensuremath{\operatorname{t}\!{#1}{}}}
\newcommand{\m}[1]{\ensuremath{\operatorname{m}{#1}{}}}
\renewcommand{\u}[1]{\ensuremath{\operatorname{u}\!{#1}{}}}
\newcommand{\ii}[1]{\ensuremath{\operatorname{i}{#1}{}}}
\newcommand{\wedgec}{{\stackrel{\wedge}{\ ,\ }}}
\theoremstyle{plain}
\newtheorem*{remark}{Remark}
\newtheorem{definition}{Definition}
\newtheorem{theorem}{Theorem}
\newtheorem{example}{Example}
\newtheorem{lemma}[theorem]{Lemma}
\newtheorem{corollary}[theorem]{Corollary}
\numberwithin{definition}{section}
\numberwithin{equation}{section}
\numberwithin{theorem}{section}
\numberwithin{example}{section}
\begin{document}


\title{Lie Algebroid Gauging of Non-linear Sigma Models}


\author[K Wright]{Kyle Wright}
\address[K Wright]{
Mathematical Sciences Institute,
Australian National University, 
Canberra, ACT 2601, Australia}
\email{kyle.wright@anu.edu.au}

\begin{abstract}
	This paper examines a proposal for gauging non-linear sigma models with respect to a Lie algebroid action.  The general conditions for gauging a non-linear sigma model with a set of involutive vector fields are given. We show that it is always possible to find a set of vector fields which will (locally) admit a Lie algebroid gauging.  Furthermore, the gauging process is not unique; if the vector fields span the tangent space of the manifold, there is a free choice of a flat connection.  Ensuring that the gauged action is equivalent to the ungauged action imposes the real constraint of the Lie algebroid gauging proposal. It does not appear possible (in general) to find a field strength term which can be added to the action via a Lagrange multiplier to impose the equivalence of the gauged and ungauged actions.  This prevents the proposal from being used to extend T-duality.  Integrability of local Lie algebroid actions to global Lie groupoid actions is discussed.
\end{abstract}

\maketitle


\tableofcontents


\section{Introduction}\label{Sec:Introduction}
Non-linear sigma models with Lie group symmetries are an important and well studied topic in string theory.  Lie algebroids are a natural generalisation of Lie algebras to vector bundles over a manifold (Lie algebras are Lie algebroids over a point).  The importance of Lie group symmetries is well established in physics.  It is natural to ask what role Lie groupoids might play in describing symmetries in non-linear sigma models.  

A gauging procedure for non-linear sigma models based on Lie algebroids has appeared in the literature \cite{Str04,Kot14,Kot15,May08}.  In principal this proposal gives a vast generalisation to the notion of gauging a non-linear sigma model.  However, there are subtleties to consider when attempting to apply this to the study of non-linear sigma models.  When this procedure was applied to the study of T-duality there was an initial belief that it let to a notion of `non-isometric T-duality' \cite{Cha16a,Cha16c}. It was later shown that gauge invariance of the action actually rendered the non-isometric T-duality proposal equivalent to non-abelian T-duality \cite{Bou17}.  The applicable Lie algebroids are in fact restricted to those which are (locally) isomorphic to Lie algebras---the existence of Killing vector fields is essential.

The above situation highlights the importance of a thorough understanding of the application of these general mathematical constructions to physical models.  This paper attempts to clarify the extent of generalisation that occurs in considering a non-linear sigma model gauged by a Lie algebroid symmetry.  The main results are the following: the generalised Killing condition (Equation \eqref{Ecompatibility}) can always be locally satisfied for an appropriate choice of vector fields (Corollary \ref{Cor:LocalGauging}). In fact, the real restriction of the proposal comes from understanding when the ungauged model can be recovered from the gauged model. It appears to be impossible (in general) to add a gauge invariant field strength term to the action to impose the equivalence of the gauged and ungauged action. This limits the usefulness of the proposal in applications where the addition of a field strength term  is necessary (such as T-duality).

The paper is organised as follows: Section \ref{Sec:Background} gives background information on Lie algebroids and groupoids; as well as describing the Lie algebroid gauging procedure and the various constraints required.  The two main constraints are closure of the gauge algebroid and a generalised Killing equation.  Section \ref{Sec:GaugeAlgebroid} takes a closer look at the closure of the gauge algebroid and shows that closure implies a flatness condition on Lie algebroid connections. These flat connections define representations of Lie algebroids describing the local gauging action.  General conditions for the solution of the generalised Killing equation (Equation \eqref{Ecompatibility}) are derived in Section \ref{Sec:GenSolution} (Theorem \eqref{Th:GaugeThrm}).  Conditions for solutions of the generalised Killing equation which also close the gauge algebroid are also derived (Theorems \eqref{Thrm:GaugeEverything} and \eqref{Thrm:GaugeArbitrary}).  Section \ref{Sec:RecoverUngauged} discusses the difficulty of imposing a condition which ensures that the original ungauged action can be recovered from the gauged action.  Comments on the extension of the local Lie algebroid actions  to global Lie groupoid actions are made in Section \ref{Sec:Groupoid}.  Flatness of algebroid connections are essential for local integrability. The fact that not all Lie algebroids are integrable (unlike Lie algebras) means that integrability is a subtle issue. Finally, the paper ends with some conclusions on Lie algebroid gauging and an outlook on the future.
 
\section{Background}\label{Sec:Background}
This section gives some relevant background material on Lie algebroids and Lie groupoids, as well as describing the Lie algebroid gauging procedure.  The two most relevant constructions are Lie algebroid connections and pullback morphisms.  The construction of flat algebroid connections is a crucial part of the Lie algebroid gauging procedure. The flat connections ${}^Q\nabla^\pm$ (introduced in Section \ref{Sec:GenSolution}) will define representations of the Lie algebroid action describing the gauging. A connection defining a Lie algebroid also gives a representation of the associated groupoid---giving the finite action of the gauging.  The description of pullback morphisms highlights a subtlety of Lie algebroids: a Lie algebroid structure is not (in general) preserved under pullbacks. There is a natural double pullback construction (see Section \ref{Sec:LieAlgMorphisms}). This is relevant to the construction of non-linear sigma models where the construction involves pulling back structure to the worldsheet.

\subsection{Lie groupoids }\label{Sec:LieAlgGroup}~\\
This section gives some relevant background information on Lie groupoids. We refer the reader to \cite{Mac05,Cra06} for a thorough introduction to the topic. In Section \ref{Sec:LieAlgGauging} Lie groupoids will be used to generalise the Lie group actions usually used to gauge non-linear sigma models. 

\begin{definition}
	A \emph{groupoid} $(\cG,M,\s{},\t{},\m{},\u{},\ii{})$ consists of a set of arrows, $\cG$, a set of objects $M$, and maps $\s{},\t{},\m{},\u{},\ii{}$, satisfying the laws of composition, associativity, and inverses:
	\begin{itemize}
		\item The \emph{source} and \emph{target} maps: $\s,\t:\ \cG\rightarrow M$, associating to each arrow $h$ its source object $\s(h)$ and target object $\t(h)$.  We write $h:x\overset{h}{\rightarrow} y$ for $h\in\cG$ satisfying $\s(h)=x$ and $\t(h)=y$.
		\item The \emph{set of composable arrows} is denoted by $\cG_2$:
		\begin{align}\label{Eq:ComposableArrows}
		\cG_2:=\{(h_2,h_1)\in\cG\times\cG: \s(h_2)=\t(h_1) \}.
		\end{align}
		For a pair of composable arrows $(h_2,h_1)$ the \emph{composition map} $\m:\cG\rightarrow \cG$ is the composition $\m(h_2,h_1)=h_2\circ h_1$ (typically denoted $h_2h_1$ for simplicity).
		\item The \emph{unit} and \emph{inverse} maps: $\u{}:  M\rightarrow \cG$, $\ii{} : \cG\rightarrow \cG$, 
		where $\u{}$ sends $x\in M$ to the identity arrow $1_x\in \cG$ at $x$, and $\ii{}$ sends an arrow $h$ to its inverse $h^{-1}$.   
	\end{itemize}
\end{definition}
For brevity we will often denote a groupoid by $(\cG,M)$ or $\cG$ if the underlying manifold is clear.

\begin{definition}
	A \emph{Lie groupoid} is a groupoid $\cG$ whose set of arrows and set of objects are both manifolds, and the structure maps $(\s,\t,\m,\u,\ii)$ are all smooth with $\s{} $ and $\t{}$ being submersions.
\end{definition}

\begin{example}[Lie group]\label{LieGroupEx}
	Every Lie group $\sfG$ can be viewed as a Lie groupoid over a point $(\cG,M)=(\sfG,\text{pt})$. 
\end{example}

\begin{example}[Fundamental Groupoid]
	Let $M$ be a manifold and let $\Pi_1(M)$
	denote the manifold consisting of all homotopy classes (with fixed end points)
	of curves in $M$. Then $(\Pi_1(M),M)$ can be endowed with the structure of a
	Lie groupoid. Let $\gamma: I\rightarrow M$ be a curve in $M$ and denote its
	homotopy class by $[\gamma]$. The source and target maps associate to $[\gamma]$ are its end points. If $\gamma_1$ and $\gamma_2$ are two curves such that
	$\gamma_1(1) = \gamma_2(0)$ then we define their product to be concatenation of curves,
	$[\gamma_2][\gamma_1] = [\gamma_2\cdot\gamma_1]$.
	The identity element at a point $x\in M$ is the class of homotopically trivial paths passing through $x$. The inverse of $[\gamma]$ is the class of $\gamma^{-1}: I\rightarrow M$, where $\gamma^{-1}(t) =\gamma(1-t)$.
\end{example}

\begin{example}[Transformation groupoid]\label{TransformationGroupoid}
	Let $\sfG$ be a group acting on a manifold $M$.  We define the \emph{transformation groupoid} $(\cG,M)=(\sfG\times M,M)$ to be the Lie groupoid whose structure maps are given by
	\begin{align*}
	\s(g,x)=&x,\quad
	\t(g,x)=gx,\quad
	(h,gx)(g,x)=(hg,x),\quad
	\u(x)=(e,x),\quad
	\ii(g,x)=(g^{-1},gx),
	\end{align*}
	where $e$ is the identity of $\sfG$.
\end{example}

\begin{example}[Gauge Groupoid]\label{GaugeGroupoid}
	Let $P(M,\pi,\sfG)$ be a principal $\sfG$-bundle.  The \emph{gauge groupoid} of $P$, denoted $(\cG(P),M)$, is defined as 
	\begin{align*}
	\cG(P)=\frac{P\times P}{\sfG},
	\end{align*}
	where the quotient refers to the diagonal action of $\sfG$ on $P\times P$ $((p,q)\cdot g=(pg,qg))$.  Let us denote by $[p,q]$ the class of $(p,q)$.  The structure of $\cG(P)$ is given by:
	\begin{align*}
	\s[p,q]=&\pi(q),\quad
	\t[p,q]=\pi(p),\quad
	[p_1,q_1][q_1,p_2]=[p_1,p_2],\\
	\u(x)=&[p,p]\text{ for some }p\in\pi^{-1}(x),\quad\ \ 
	\ii([p,q])=[q,p].
	\end{align*}
\end{example}

\begin{example}[Symplectic groupoid on $T^*\sfG$]\label{SymplecticGroupoid}
	Given a Lie group $\sfG$, with $\fg=\Lie(\sfG)$, we can define a Lie groupoid $(\cG,M)=(T^*\sfG,\fg^*)$,  equipped with its canonical symplectic structure.  If we identify $T^*\sfG\cong \sfG\times \fg^*$ the Lie groupoid structure on $T^*\sfG$ is simply that of the transformation groupoid associated to the coadjoint action of $\sfG$ on $\fg^*$.  For more details see \cite{Wei87}.
\end{example}

Lie algebras are the infinitesimal objects associated to Lie groups with $\fg=T_e\sfG$.  The Lie algebra structure can be associated to right(left)-invariant vector fields on $T\sfG$.  A similar relationship holds between Lie algebroids associated to Lie groupoids.  Given a Lie groupoid $\sfG$, we can associate a Lie algebroid to the sub-bundle $T^{\s{}} \cG=\ker(\d{\s{}})\subset T\cG$---those sections which are tangent to the $\s{}$-fibres. The Lie algebroid structure is associated to right-invariant sections on $\cG$.~\\

\subsection{Lie algebroids}\label{Sec:LieAlgebroids}~\\
A general Lie algebroid is an axiomisation of the tangent Lie algebroid of a Lie groupoid.

\begin{definition}\label{Def:LieAlgebroid}
	A \emph{Lie algebroid} $(Q,M,\rho,[\cdot\ ,\cdot]_Q)$ consists of a vector bundle $Q$ over a manifold $M$ equipped with a bundle map $\rho:Q\rightarrow TM$ and a bracket $[\cdot,\cdot]_Q:\Gamma(Q)\times \Gamma(Q)\rightarrow \Gamma(Q)$ satisfying:
	\begin{subequations}
		\begin{align}
		[q_1,[q_2,q_3]_Q]_Q=&[[q_1,q_2]_Q,q_3]_Q+[q_2,[q_1,q_3]_Q]_Q,\label{JacobiIdentity}\\
		[q_1,fq_2]_Q=&f[q_1,q_2]_Q+((\rho(q_1)f))q_2,\label{LieAlgScaling}\\
		[q_1,q_2]_Q=&-[q_2,q_1]_Q,\label{SkewSymmetry}
		\end{align}
	\end{subequations}
	for $q_1,q_2,q_3\in\Gamma(Q)$ and $f\in C^\infty(M)$.
\end{definition}
Identities \eqref{JacobiIdentity} and \eqref{SkewSymmetry} imply that $[\cdot,\cdot]_Q$ is a Lie bracket.  The first two identities imply that the anchor map $\rho$ is a bracket homomorphism:
\begin{equation}\label{anchorhomo}
\rho([q_1,q_2]_Q)=[\rho(q_1),\rho(q_2)]_{TM}
\end{equation}
where $[\cdot\ ,\cdot]_{TM}$ is the bracket defined by the commutator of vector fields.

\begin{example}[Lie algebra]\label{LieAlgebraEx}
	A Lie algebra $\fg$ defines a Lie algebroid with $Q=\fg$, $M=\text{pt}$, $\rho=0$. This is the infinitesimal object corresponding to a Lie groupoid $(\cG,M)=(\sfG,\text{pt})$, where $\fg=\Lie(\sfG)$.
\end{example}

\begin{example}[Tangent bundle]\label{TangentBundle}
	Given a manifold $M$, there is Lie algebroid on $TM$ with the bracket given by commutator of vector fields, and the anchor given by the identity map $\rho=\text{\emph{Id}}_{TM}$.
\end{example}

\begin{example}[Foliations]\label{FoliatedLiealgebroid}
	Let $\cF$ be a regular  foliation on $M$, so that $T\cF\subset TM$ is an involutive distribution of constant rank.  The distribution $T\cF$ has a Lie algebroid structure, with the bracket given by the commutator of vector fields, and the anchor given by the inclusion $i:T\cF\rightarrow TM$.   The orbits of this Lie algebroid are the leaves of $\cF$.
\end{example}

\begin{example}[Infinitesimal $\fg$-action]
	Let $\psi:\fg\rightarrow \Gamma(TM)$ be an infinitesimal action of a Lie algebra $\fg$ on $TM$. The \emph{transformation Lie algebroid} is defined on $Q=M\times \fg$, where  $\rho(x,X)=\psi(X)|_{x}$ and 
	\begin{align*}
	[X_1,X_2]_{M\times\fg}(x)=[X_1,X_2]_\fg+(\psi(X_1)\cdot X_2)(x)-(\psi(X_2)\cdot X_1)(x).
	\end{align*}
	This is the Lie algebroid associated to the transformation groupoid (Example \ref{TransformationGroupoid}).
\end{example}

\begin{example}[Atiyah algebroid]\label{AtiyahAlgebroid}
	Let $P(M,\pi,\sfG)$ be a principal $\sfG$-bundle.  The \emph{Atiyah algebroid} is defined on $Q=TP/G$ as part of the exact sequence
	\begin{equation*}
	\xymatrix{0\ar[r]& (P\times\mathfrak{g})/\sfG \ar[r]  & TP/\sfG \ar[r]^{\rho}  & TM \ar@/^1pc/[l]^\sigma \ar[r] & 0}.			
	\end{equation*}
	Sections of $Q$ are identified with right-invariant vector fields on $P$.  The bracket is given by the commutator of right-invariant vector fields; the anchor is $\pi_*|_{TP/G}:A\rightarrow TM$ induced by $\pi_*$.  A short exact sequence of vector bundles gives a short exact sequence of the $C^\infty(M)$-modules of sections:
	\begin{align*}
	\xymatrix{0\ar[r]& C^\infty(P,\fg)^\sfG  \ar[r] & \fX_\sfG(P) \ar[r]^-{\pi_*}  & \fX  \ar[r] & 0},
	\end{align*}
	where $C^\infty(P,\fg)^\sfG$ is the module of $\sfG$-equivariant smooth functions from $P$ to $\fg$. If $TP/\sfG$ is a trivial bundle there is an isomorphism $C^\infty(P,\fg)^\sfG\cong C^\infty(M,\fg)$. A choice of splitting is given by a choice of one-form connection $\sigma\in\Omega^1(M,TP/\sfG)$. The corresponding curvature $F_\sigma\in \Omega^2(M,TP/\sfG)$ is given by
	\begin{align*}
	F_\sigma(v_1,v_2)=\sigma([v_1,v_2])-[\sigma(v_1),\sigma(v_2)]_{TP/\sfG}.
	\end{align*}
	Letting $(v,\gamma)\in\Gamma(A)=\fX\oplus C^\infty(M,\fg)$ the Atiyah algebroid bracket is given by
	\begin{align*}
	[(v_1,\gamma_1),(v_2,\gamma_2)]_F=&([v_1,v_2],[\gamma_1,\gamma_2]_\fg+\nabla^\sigma_{v_1}\gamma_2-\nabla^\sigma_{v_2}\gamma_1-F_{\sigma}(v_1,v_2)).
	\end{align*}
	The Atiyah algebroid is the Lie algebroid associated to the Gauge groupoid (Example \ref{GaugeGroupoid}).
	
\end{example}

\begin{example}[Generalised Atiyah sequence]
	Locally any Lie algebroid can be associated to a generalised Atiyah sequence:
	\begin{align*}
	\xymatrix{0\ar[r]& \fg_\cO \ar[r]  & Q|_{\cO} \ar[r]^{\rho}  & T\cO \ar@/^1pc/[l]^\sigma \ar[r] & 0},
	\end{align*} 
	where $\fg_{\cO}=\ker(\rho)$ denotes the isotropy algebra of $\cO$ and $T\cO=\text{Im}(\rho)$.  
\end{example}	

\begin{example}[Poisson cotangent Lie algebroid]\label{PoissonCotangent}
	Given a Poisson manifold\footnote{A \emph{Poisson structure} is a non-degenerate $\pi\in\Gamma(\wedge^2TM)$ satisfying $[\pi,\pi]_{\text{Schouten}}=0$.} $(M,\pi)$ there is a \emph{cotangent Lie algebroid}: $Q=T^*M$, $\rho(\xi)=\pi(\xi,\cdot)$ for $\xi\in\Gamma(T^*M)$, and the bracket is given by
	\begin{align}\label{PoissonAlgebroid}
	[\xi_1,\xi_2]_{T^*M}=\cL_{\pi(\xi_1,\cdot)}\xi_2-\cL_{\pi(\xi_1,\cdot)}\xi_2-d(\pi(\xi_1,\xi_2)).
	\end{align}
	This is the Lie algebroid associated to a symplectic groupoid on $T^*M$ (Example \ref{SymplecticGroupoid}).
\end{example}

The Lie algebroids of interest for the application of Lie algebroid gauging arise from flat connections.  This involves generalising the notion of a vector bundle connection to a connection on a vector bundle endowed with a Lie algebroid structure.

\begin{definition}
	Given a Lie algebroid structure on $Q\rightarrow M$, and a vector bundle $V\rightarrow M$, a \emph{$Q$-connection on $V$} is a bilinear map $\nabla:\Gamma(Q)\times \Gamma(V)\rightarrow \Gamma(V)$ which satisfies the following conditions:
	\begin{itemize}
		\item $\nabla_{fq}v=f\nabla_q v$;
		\item $\nabla_qfv=f\nabla_q v+(\rho(q)f)v$;
	\end{itemize}
	where $f\in C^\infty(M)$, $q\in\Gamma(Q)$, and $v\in\Gamma(V)$.
\end{definition}

\begin{example}[Affine connection]
	When $Q=V=TM$ we recover the notion of an affine connection.
\end{example}

\begin{example}[Vector bundle connection]
	Given a vector bundle $V\rightarrow M$, and the Tangent Lie algebroid on $TM$, a $TM$-connection on $V$ coincides with the definition of a vector bundle connection.
\end{example}

Closure of the gauge algebroid for the Lie algebroid gauging procedure will be related to the curvature of a $Q$-connection on $Q$ (See Section \ref{Sec:GaugeAlgebroid}).  

Given a $Q$-connection on $V$, denoted by $\nabla$,  there is a natural definition of curvature $R_\nabla\in\Gamma(\wedge^2 Q^*\otimes \End(V))$:
\begin{align}\label{LieAlgCurvature}
R_\nabla(q_1,q_2)(v)=\nabla_{q_1}\nabla_{q_2}v-\nabla_{q_2}\nabla_{q_1}v-\nabla_{[q_1,q_2]_Q}v,
\end{align}
for $q_1,q_2\in\Gamma(Q)$ and $v\in\Gamma(V)$.

Given a $Q$-connection on $Q$, denoted $\nabla$, we define the Lie algebroid torsion $T_\nabla\in\Gamma(\wedge^2Q^*\otimes Q)$ as
\begin{align}
T_\nabla(q_1,q_2)=\nabla_{q_1}q_2-\nabla_{q_2}q_1-[q_1,q_2]_Q.
\end{align}

\begin{definition}
	A \emph{representation} of a Lie algebroid $Q$ over $M$ on a vector bundle $V\rightarrow M$ is a choice of flat connection $\nabla:\Gamma(Q)\times \Gamma(V)\rightarrow \Gamma(V)$.
\end{definition}

Given a representation of a Lie algebroid (a choice of flat connection $\nabla:\Gamma(Q)\times\Gamma(V)\rightarrow \Gamma(V)$) we can (locally) construct a representation of the associated groupoid. A choice of connection $\nabla$ can be used to construct a path groupoid via an exponential map $\text{Exp}_\nabla:Q\rightarrow \cG(Q)$. This path groupoid is sometimes referred to as the Weinstein groupoid. Any Lie algebroid is integrable to a local Lie groupoid. Details on this construction as well as obstructions to global integrability can be found in \cite{Cra05}.

\subsubsection{\bf{Lie algebroid morphisms}}\label{Sec:LieAlgMorphisms}~\\
The following description of Lie algebroid morphisms is based on \cite{Hig88}.  Let $V\rightarrow M$ and $Q\rightarrow M$ be vector bundles over the same base manifold $M$. A morphism $\Phi:V\rightarrow Q$ induces a map of smooth sections $\Gamma(V)\rightarrow \Gamma(Q)$, given by $v\rightarrow \Phi\circ v$, which is a linear map of $C^\infty(M)$-modules. In this case the notion of a Lie algebroid morphism is equivalent to the existence of a morphism $\Phi$ between the vector bundles preserving the bracket
\begin{align*}
\Phi\circ [v_1,v_2]_{V}=[\Phi\circ v_1,\Phi\circ v_2]_{Q},
\end{align*}
for $v_1,v_2\in\Gamma(V)$.  However, for vector bundles $V\rightarrow \Sigma$ and $Q\rightarrow M$ over different bases, a morphism $\Phi:V\rightarrow Q$ and $X:\Sigma\rightarrow M$, does not induce a map between modules of sections.  It is necessary to consider the pullback bundle $X^*Q$. Sections $v\in\Gamma(V)$ can be pushed forward to sections $\Phi(v)\in\Gamma(X^*Q)$, and sections $q\in\Gamma(Q)$ can be pulled back to $X^*q\in\Gamma(X^*Q)$. Given a section $v\in\Gamma(V)$, there is a decomposition
\begin{align*}
\Phi^*(v)=f^i\otimes q_i,
\end{align*}
for some suitable $f^i\in C^\infty(\Sigma)$, and $q_i\in \Gamma(Q)$.  However this decomposition is not unique.  Choose a connection $\nabla:\Gamma(TM)\times\Gamma(Q)\rightarrow \Gamma(Q)$ and define $\bar{\nabla}:\Gamma(T\Sigma)\times \Gamma(X^*Q)\rightarrow\Gamma(X^*Q)$ by
\begin{align}\label{PullbackConnection}
\bar{\nabla}_\sigma(f_i\otimes q^i)=\sigma(f_i)\otimes q^i+f_i\otimes \nabla_{X_*(\sigma)}q^i,
\end{align}
where $\sigma\in \Gamma(T\Sigma)$, $f_i\in C^\infty(\Sigma)$, and $q^i\in \Gamma(Q)$. The definition of $\bar{\nabla}$ is not dependent on the choice of decomposition. Define the torsion of the pullback connection $\bar{\nabla}$ by
\begin{align*}
T_{\bar{\nabla}}(f_i\otimes q^i,f'_j\otimes q'^j):=f_if'_jX^*T_\nabla(q_i,q_j),
\end{align*}
and the field strength $F_\Phi\in\Gamma(\wedge^2V^*\otimes X^*Q)$ by
\begin{align}
F_\Phi(v_1,v_2)=\bar{\nabla}_{\rho(v_1)}\Phi(v_2)-\bar{\nabla}_{\rho(v_2)}\Phi(v_1)-\Phi([v_1,v_2]_V)-T_{\nabla}(\Phi(v_1),\Phi(v_2)).
\end{align}
The definition of $F_\Phi$ is independent of the choice of connection $\nabla$.  The pair $(\Phi,X)$ defines a \emph{Lie algebroid morphism} if and only if $F_\Phi\equiv 0$.    

\subsubsection{\bf{Example: Double pullback of a Lie algebroid}}\label{Sec:Pullback}~\\
In this subsection we consider the pullback of a Lie algebroid structure.  This will be of particular interest in Section \ref{Sec:GaugingDescription} when discussing Lie algebroid gauging of non-linear sigma models.

Consider a Lie algebroid $Q\rightarrow M$, and a smooth map $X:\Sigma\rightarrow M$.  There is no natural induced Lie algebroid on $X^*Q$, due to the fact that a vector bundle morphism does not induce a map between the modules of sections.  However, there may be an induced Lie algebroid structure.  This construction is due Higgins and Mackenzie \cite{Hig88}.

Consider the following bundle map:  
\begin{equation*}
\begin{tikzpicture}[node distance=2cm, auto]
\node (node) {};
\node [below of=node] (TM1) {$T\Sigma$};
\node [right of=TM1, node distance =3cm] (TM2) {$TM$};
\node [right of=node, node distance =3cm] (B) {$Q$};

\draw[->] (TM1) -- node[above,anchor=south]{$X_*$} (TM2); 
\draw[->] (B) -- node[above, anchor=west]{$\rho$} (TM2);
\end{tikzpicture}
\end{equation*}
The aim is to construct a Lie algebroid structure on $(X^{**}Q)\rightarrow \Sigma$, using the bundle maps 
\begin{equation*}
\begin{tikzpicture}[node distance=2cm, auto]
\node (A) {$X^{**}Q$};
\node [below of=node] (TM1) {$T\Sigma$};
\node [right of=TM1, node distance =3cm] (TM2) {$X^*TM$};
\node [right of=node, node distance =3cm] (B) {$X^*Q$};

\draw[->] (TM1) -- node[above,anchor=south]{$(X_*)^*$} (TM2); 
\draw[->] (B) -- node[above, anchor=west]{$\phi^*\rho$} (TM2);
\draw[->] (A) -- node[above, anchor=east]{$\check{\rho}$} (TM1);
\draw[->] (A) -- node[above, anchor=south]{$\Phi^*$} (B);
\end{tikzpicture}
\end{equation*}
Sections of $X^{**}Q$ are of the form $s\oplus \beta$, where $s\in\Gamma(T\Sigma)$ and $\beta\in\Gamma(X^*Q)$.  The induced Lie algebroid $X^{**}Q$ exists whenever $X$ is a surjective submersion, or $Q$ is transitive, or if $X_*$ and $\rho$ are transversal. In these cases there is a Lie algebroid structure on $X^{**}Q$ described as follows: define $\check{\rho}(\sigma\oplus \beta)=\sigma$ and 
\begin{align}\label{PullbackBracket}
[s_1\oplus \beta_1,s_2\oplus \beta_2]_A:=[s_1,s_2]_{T\Sigma}\oplus \Big(\bar{\nabla}_{s_1}\beta_2-\bar{\nabla}_{s_2}\beta_1-T_{\bar{\nabla}}(\beta_1,\beta_2) \Big),
\end{align}
for $\sigma_1,\sigma_2\in\Gamma(T\Sigma)$ and $\beta_1,\beta_2\in\Gamma(X^*Q)$.

\begin{example}
	Take $M$ to be a point, and hence $Q=\fg$ is a Lie algebra. The inverse image connection in $\Sigma\times \fg$ is the standard flat connection $\check{\nabla}_{s}(x)=s(x)$, and the above formula reduces to the standard expression of the bracket in $T\Sigma\oplus(\Sigma\times \fg)$.
\end{example}

It is of interest to consider the existence of the groupoid $X^{**}\cG$.  Given a Lie groupoid $\cG$ on $M$ and a smooth map $X:\Sigma\rightarrow M$, such that $X\times X:\Sigma\times \Sigma\rightarrow M\times M$ and $(\s,\t):\cG\rightarrow M\times M$ are transversal, the pullback 
\begin{equation*}
\begin{tikzpicture}[node distance=2cm, auto]
\node (A) {$X^{**}\cG$};
\node [below of=node] (M1) {$\Sigma\times \Sigma$};
\node [right of=M1, node distance =3.5cm] (M2) {$M\times M$};
\node [right of=node, node distance =3.5cm] (B) {$\cG$};

\draw[->] (M1) -- node[anchor=north]{$X\times X$} (M2); 
\draw[->] (B) -- node[above, anchor=west]{$(\s,\t)$} (M2);
\draw[->] (A) -- node[above, anchor=east]{} (M1);
\draw[->] (A) -- node[above, anchor=south]{} (B);
\end{tikzpicture}
\end{equation*}
is a manifold and has a groupoid structure.  However, it is not necessarily true that the source or target maps are submersions, meaning $X^{**}\cG$ may not be a Lie groupoid.  If the composition
\begin{align*}
\xymatrix{
	X^*\cG \ar[r] & \cG \ar[r]^{\t{}} & M,
}
\end{align*}
is a submersion then $X^{**}\cG$ will be a Lie groupoid.  Here $X^*\cG$ denotes the pullback of $X$ and $\s{}$:
\begin{align*}
X^*\cG:=\{(\fX,\sigma)\in \cG\times \Sigma: \s(\fX)=X(\sigma) \}.
\end{align*}
The double pullback construction will be essential in defining Lie algebroid gauging in Section \ref{Sec:LieAlgGauging}.

\subsection{Lie algebroid gauging}\label{Sec:GaugingDescription}~\\
This section describes the procedure for gauging a non-linear sigma model with respect to a set of involutive vector fields $\{\rho_a \}$ and two vector bundle connections $\nabla^\pm$ specified by connection coefficients $(\omega\pm\phi)^a{}_{\mu b}$. 

\subsubsection{\bf{Kotov--Strobl Lie algebroid gauging}}\label{sec:KotovStrobl}~\\
This section outlines the local coordinate description of Lie algebroid gauging developed by Kotov and Strobl \cite{Str04,Kot14}, Mayer and Strobl \cite{May08}, and further studied with Chatzistavrakidis, Deser, and Jonke \cite{Cha16b}.

The general proposal for Lie algebroid gauging can be found in \cite{Cha16b}.  Consider a map $X:\Sigma\rightarrow M$, embedding a string worldsheet into a target space $M$.  This map can be described locally by $X^\mu$, for $\mu=1,\dots,\dim(M)$.  The key generalisation associated to Lie algebroid gauging is the ability to gauge with respect to a set of involutive vector fields $\rho_a\in TM$, $a=1,\dots,d$ satisfying\footnote{The integer $d$ need not be equal $\dim(M)$.} 
\begin{align}\label{StructureFunctions}
[\rho_a,\rho_b]=C^c{}_{ab}(X)\rho_c,\quad C^c{}_{ab}(X)\in C^\infty(M),
\end{align}  
defining a Lie algebroid.  A Lie algebroid structure can be defined as follows:  Let $Q\rightarrow M$ be a vector bundle, specified locally by a frame $\{ e_a \}$, $a=1,\dots,d=\dim(Q)$, satisfying
\begin{align}\label{BasicQLiealgebroid}
[e_a,e_b]_Q:=C^c{}_{ab}(X)e_c.
\end{align}  
The anchor $\rho:Q\rightarrow TM$ is defined by $\rho(e_a):=\rho_a$.  The gauged action given in \cite{Cha16b} is
\begin{align}\label{KSaction}
S_{\text{KS}}[X,A]=\frac{1}{2}\int_{\Sigma}G_{\mu\nu}DX^\mu\wedge \star DX^\nu+\int_{\Sigma_3}H+\int_{\Sigma}(A^a\wedge\alpha_a+\frac{1}{2}\gamma_{ab}A^a\wedge A^b),
\end{align} 
where $DX^\mu:=\d X^\mu-\rho^\mu_aA^a$, $H\in\Omega^3(M)$, $\Sigma_3$ is a three manifold with boundary $\Sigma$, $A\in\Omega^1(\Sigma,X^*Q)$,  $\alpha\in\Gamma(Q^*)$, and $\gamma\in \Gamma(\wedge^2Q^*)$. 

The infinitesimal gauge transformations are of the form
\begin{subequations}\label{MSgauge}
	\begin{align}
	\delta_\varepsilon X^\mu=&\rho^\mu_a(X)\varepsilon^a\\
	\delta_{\varepsilon}A^a=&\d \varepsilon^a+C^a{}_{bc}(X)A^b\varepsilon^c+\omega^a{}_{\mu b}(X)\varepsilon^bDX^\mu+\phi^a{}_{\mu b}(X)\varepsilon^b\star DX^\mu,
	\end{align}
\end{subequations}
where $\omega^a{}_{\mu b},\phi^a{}_{\mu b}\in C^\infty(M)$ are a priori undetermined fields and $\star$ denotes the Hodge star on the worldsheet.  

Under a change of frame $\tilde{e}_a=K^b{}_{a}e_b$ the fields $\omega^a{}_{\mu b}$ and $\phi^a{}_{\mu b}$ transform as
\begin{align*}
\tilde{\omega}^a{}_{\mu b}=(K^{-1})^a{}_c\omega^c{}_{\mu d}K^d{}_{b}-K^c{}_{b}\partial_\mu (K^{-1})^a{}_{c},\quad \tilde{\phi}^a{}_{\mu b}= (K^{-1})^a{}_{c}\phi^c{}_{\mu d}K^d{}_b.
\end{align*}
Thus $\omega:\Gamma(Q)\rightarrow \Gamma(Q\otimes T^*M)$ defines a $TM$-connection on $Q$, and $\phi\in\Omega^1(M,\End(Q))$.

The action $S_{\text{KS}}[X,A]$ is invariant under the gauge transformations \eqref{MSgauge} if the following constraints hold:
\begin{subequations}\label{KSconstraints}
	\begin{align}
	\cL_{\rho_a}G=&\omega^b{}_a\vee \iota_{\rho_{b}}G+\phi^b{}_a\vee\alpha_b ,\\
	\cL_{\rho_a}H=&\d \alpha_a-\omega^b{}_a\wedge \alpha_b\pm\phi^b{}_a\iota_{\rho_b}G,\\
	\gamma_{ab}=&\iota_{\rho_a}\alpha_b,\\
	\cL_{\rho_a}\alpha_b=&C^c{}_{ab}\alpha_c+\iota_{\rho_b}(\d \alpha_a-\iota_{\rho_a}H),
	\end{align}
\end{subequations}
where $(\omega^b{}_a\vee \iota_{\rho_b}G)_{\mu\nu}=\omega^b{}_{\mu a}\rho^\lambda_bG_{\lambda\nu}+\omega^b{}_{\nu a}\rho^\lambda_b G_{\mu\lambda}$, and the choice
$\pm$ is given by the choice of Lorentzian ($\star^2=1$) or Euclidean ($\star^2=-1$) signature on the worldsheet.

Closure of the gauge algebroid (defined by Equations \eqref{MSgauge}) and the required constraints \eqref{KSconstraints} form a formidable set of conditions.  There are two natural questions:
\begin{enumerate}
	\item For a given choice of $G$ and $H$, do there exist  $(\rho_a,\alpha_a,\omega^a{}_{\mu b},\phi^a{}_{\mu b})$ satisfying the constraints \eqref{KSconstraints} which also result in a closed gauge algebroid (given by \eqref{MSgauge})?
	\item If an appropriate choice of $(\rho_a,\alpha_a,\omega^a{}_{\mu b},\phi^a{}_{\mu b})$ exists is the choice  unique?  
\end{enumerate}  
An answer to the existence question is given for special cases in \cite{Cha16b,Cha17}.  The results of this paper give a more complete answer: Corollary \ref{Cor:LocalGauging} states that for any choice of $G$ and $H$ there exist $(\rho_a,\alpha_a,\omega^a{}_{\mu b},\phi^a{}_{\mu b})$ which satisfy the constraints \eqref{KSconstraints} for some $U\subset M$.  Necessary and sufficient conditions to gauge with respect to a chosen set of vector fields $\rho_a\in\Gamma(TM)$ are determined (Theorem \ref{Thrm:GaugeEverything} and Theorem \ref{Thrm:GaugeArbitrary}).  If $\rho_a\in\Gamma(TM)$ do satisfy the necessary and sufficient conditions a (not necessarily unique) solution for $(\alpha_a,\omega^a{}_{\mu b},\phi^a{}_{\mu b})$ is given. 

\subsubsection{\bf{Pullback constraint of Kotov--Strobl gauging}}\label{PullbackKS}~\\
It should be noted that there are implied pullbacks in the action and the gauge transformations \eqref{MSgauge}.   Closure of the gauge algebroid requires that the Lie algebroid structure can be pulled back.  In general this is not possible. This represents a serious restriction on the allowable Lie algebroids described by this method (see Section \ref{PullbackKS}).

The Lie algebroid described by Equation \eqref{BasicQLiealgebroid} is not invariant. Given a change of frame $\tilde{e}_a=K^b{}_ae_b$, for $K\in C^\infty(M,\sfGL(d))$, the description becomes $[\tilde{e}_a,\tilde{e}_b]_Q=\widetilde{C}^c{}_{ab}(X)\tilde{e}_c$,
where
\begin{align}
\widetilde{C}^c{}_{ab}=(K^{-1})^c{}_{d}(K^e{}_{a}K^f{}_{b}&C^d{}_{ef}+K^e{}_{a}\rho^\mu_e\partial_\mu K^d{}_{b}-K^e{}_{b}\rho^\mu_e\partial_\mu K^d{}_{a}).\label{StructureChangeFrame}
\end{align}

The variation of the gauge fields for the Kotov--Strobl gauging proposal are given by Equations \eqref{MSgauge}.  The closure of the gauge algebroid requires that
\begin{align*}
[\delta_{\varepsilon_1},\delta_{\varepsilon_2}]X^\mu=\delta_{\varepsilon_3}X^\mu,\quad [\delta_{\varepsilon_1},\delta_{\varepsilon_2}]A^a=\delta_{\varepsilon_3}A^a,
\end{align*}
for some $\varepsilon_3=\sigma(\varepsilon_1,\varepsilon_2)=-\sigma(\varepsilon_2,\varepsilon_1)\in \Gamma(X^*Q)$.  The field $\varepsilon_3\in\Gamma(X^*Q)$ can be written using the pullback of a basis for $Q$: $\varepsilon_3=\varepsilon^a_3X^*e_a\in \Gamma(X^*Q)$.  The following expression for $\varepsilon^a_3$ is given in the literature (Equation (10) in \cite{May08}):
\begin{align}\label{KSclosure}
\varepsilon^a_3=\varepsilon^b_1\varepsilon^c_2C^a{}_{bc}.
\end{align} 
This identification only allows for a restricted set of Lie algebroid structures.  We can see this explicitly. Consider the Lie algebroid structure $(Q,[\cdot,\cdot]_Q,\rho)$ (Defined by \eqref{BasicQLiealgebroid}) restricted to the image $(X(\Sigma),Q|_{X(\Sigma)})\subset (M,Q)$. Denote the restricted algebroid structure  $(Q|_{X(\Sigma)},[\cdot,\cdot]_{X(\Sigma)},\rho_{X(\Sigma)})$.  Take a change of frame on $Q|_{X(\Sigma)}$ given by
$\tilde{e}_a=K(X(\sigma))^b{}_{a}e_b$.
Invariance of sections gives the transformation of the coefficients: 
\begin{align*} \varepsilon=\varepsilon^aX^*e_a=\tilde{\varepsilon}^aX^*\tilde{e}_a=\tilde{\varepsilon}^aK(\sigma)^b{}_{a}X^*e_b,\quad\Longrightarrow \quad  \tilde{\varepsilon}^a=(K^{-1})^a{}_b\varepsilon^b.
\end{align*} 
This gives a constraint on the transformation of the structure functions on $X(\Sigma)$:
\begin{align*}
\varepsilon_3=\varepsilon^b_1\varepsilon^c_2C^a{}_{bc}X^*e_a=\tilde{\varepsilon}^b_1\tilde{\varepsilon}^c_2\widetilde{C}^a{}_{bc}X^*\tilde{e}_a=\varepsilon^y_1\varepsilon^z_2(K^{-1})^b{}_{y}(K^{-1})^c{}_{z}\widetilde{C}^x_{yz}K^a{}_xX^*e_a,
\end{align*}
and we conclude that
\begin{align}
\widetilde{C}^a{}_{bc}=(K^{-1})^a{}_{x}C^x{}_{yz}K^y{}_b K^z{}_c.\label{Cconstraint1}
\end{align}
However, it follows from \eqref{StructureChangeFrame} that the structure functions (restricted to $X(\Sigma)$) transform as
\begin{align}
\widetilde{C}^c{}_{ab}=(K^{-1})^c{}_{d}(K^e{}_{a}K^f{}_{b}&C^d{}_{ef}+K^e{}_{a}(X^{-1}_*\rho)_e( K^d{}_{b})-K^e{}_{b}(X^{-1}_*\rho)_e(K^d{}_{a})),\label{Cconstraint2}
\end{align}
where $(X^{-1}_*\rho)_a$ denotes the pushforward of the map $X^{-1}$ (which exists as $X$ is a diffeomorphism when restricted to $X(\Sigma)\subset M$).  It is clear that the requirement \eqref{Cconstraint1} places a tight constraint on the allowable Lie algebroids for gauging.  In particular, the requirement that \eqref{Cconstraint1} and \eqref{Cconstraint2} hold simultaneously, mean that the Lie algebroid bracket $[\cdot,\cdot]_{X(\Sigma)}$ is $C^\infty(X(\Sigma))$ linear.

The Lie algebroid gauging procedure outlined by Kotov, Mayer, Strobl and CDJ \cite{Str04,May08,Kot14,Kot15,Cha16b} is only valid when $(Q|_{X(\Sigma)},[\cdot,\cdot]_{X(\Sigma)},\rho_{X(\Sigma)})$---the restriction of $(Q,[\cdot,\cdot]_Q,\rho)$ to the image of $X$---is a bundle of Lie algebras. 

\subsubsection{\bf{Lie algebroid gauging}}\label{Sec:LieAlgGauging}~\\
This section describes a general construction for considering non-linear sigma models which are gauged with respect to a Lie algebroid action.  This construction is valid for any integrable Lie algebroid, $Q\cong\Lie(\cG)$.  

A two-dimensional non-linear sigma model Consists of the data $(X,\Sigma,h,M,G,H,S[X])$:
where $X:\Sigma\rightarrow M$ describes the embedding of a two-dimensional (pseudo-)Riemannian surface $(\Sigma,h)$ (the string worldsheet) in an $n$-dimensional (pseudo-)Riemannian manifold $(M,G)$ (the target space); the dynamics of the string are encoded in an action
\begin{align}
S[X]=\frac{1}{2}\int (X^*G)_{\mu\nu}\d X^\mu\wedge\star \d X^\nu+\int_{\Sigma_3}X^*H,
\end{align}
where $\star$ is the Hodge star of the worldsheet, $\Sigma_3$ is a three-dimensional manifold with boundary $\Sigma$, and $H\in\Omega^3(M)$.

The gauged model takes a non-linear sigma model $(X,\Sigma,h,M,G,S[X])$ and constructs an action $S_Q[X,A]$ for some choice of $\{\rho_a\}$ and $\{(\Omega^\pm)^a{}_{\mu b}\}$, where the vector fields $\rho_a\in\Gamma(TM)$ define a Lie algebroid
\begin{align*}
[\rho_a,\rho_b]=:C^c{}_{ab}\rho_c,\quad C^c{}_{ab}\in C^\infty(M),
\end{align*}
and $(\Omega^\pm)^a{}_{\mu b}$ define $TM$-connections on a vector bundle $Q$. The $TM$-connections on $Q$ will be denoted $\nabla^\pm$.  Let $\{e_a \}$ be a local frame for $Q$.  The connections $\nabla^\pm$ are defined as
\begin{align}
\nabla^\pm e_a:=(\Omega^\pm)^b{}_{a}\otimes e_b.
\end{align} 
The fields $(\rho_a,(\Omega^\pm)^a{}_{\mu b})$ determine $(\check{\rho}_a,\check{\Omega}^a{}_{\alpha b})$ which are defined in Section \ref{ProperPullback} using the pullback construction.

Let $\{\sigma^\alpha\}$ be local coordinates on the worldsheet $\Sigma$.  Consider the following action (for $\star^2=1$ Lorentzian worldsheet):
\begin{equation}\label{GenLieAction}
S_Q[X,A]=\int_{\Sigma}(X^{**}E)_{\alpha\beta}D_-\sigma^\alpha\wedge D_+\sigma^\beta-\int_{\Sigma}X^{**}C+\int_{\Sigma_3}X^{**}H,
\end{equation}
where, $E=G+C$ for some $C\in\Omega^2(M)$, $D\sigma=d\sigma-\check{\rho}(A)$, $A\in\Omega^1(\Sigma,X^{**}Q)$, 
and $D_\pm \sigma=\frac{1}{2}(D\sigma\pm\star D\sigma)$. Remembering that $\Gamma(X^{**}Q)\cong \Gamma(T\Sigma)\oplus \Gamma(X^*Q)$ we define $X^{**}E\in\Gamma(X^{**}Q^*\otimes X^{**}Q^*)$ and $X^{**}C\in\Omega^2(\Sigma,X^{**}Q)$ as
$X^{**}E=\left(\begin{smallmatrix}
h & 0\\ 0 &X^*E
\end{smallmatrix}\right)$ and 
 $X^{**}C=\left(\begin{smallmatrix}
 0 & 0\\ 0 &X^*C
 \end{smallmatrix}\right)$ using the decomposition $X^{**}Q^*\cong T^*\Sigma\oplus X^*Q^*$. The 3-form  $X^{**}H\in\Omega^3(\Sigma,X^{**}Q)$ has $X^{*}H\in\Omega^3(\Sigma,X^*Q)$ as the only non-zero component.

The infinitesimal variation is given by 
\begin{subequations}\label{GaugeAlg+}
	\begin{align}
	\delta_{\varepsilon} X=&\check{\rho}(\varepsilon)(X),\\
	\delta_{\varepsilon}A^i=&d\varepsilon^i+\check{C}^i{}_{jk}A^j\varepsilon^k+(\check{\Omega}^+)^i{}_{\alpha j}\varepsilon^jD_+\sigma^\alpha+(\check{\Omega}^-)^i{}_{\alpha j}\varepsilon^jD_-\sigma^\alpha,
	\end{align}
\end{subequations}
and $\delta_{\varepsilon}S_Q[X,A]=0$ if the following conditions are met:
	\begin{align}
	(\cL_{\rho_a}E)_{\mu\nu}=&E_{\mu\lambda}\rho^\lambda_b(\Omega^+)^b{}_{\nu a}+E_{\lambda\nu}\rho^\lambda_b(\Omega^-)^b{}_{\mu a},\label{Ecompatibility}\\
	\cL_{\rho(\varepsilon)}C=&\iota_{\rho(\varepsilon)}H.\label{Eq:DefineC}
	\end{align}
We will refer to \eqref{Ecompatibility} as the \emph{generalised Killing equation}.

We note that the infinitesimal variations \eqref{GaugeAlg+} are equivalent (up to pullbacks) to \eqref{MSgauge} if we make the identification $(\Omega^\pm)^a{}_{\mu b}=(\omega \pm\phi)^a{}_{\mu b}$.

\subsubsection{\bf{Gauge algebroid}}\label{ProperPullback}~\\
It was shown in Section \ref{PullbackKS} that taking $\varepsilon\in\Gamma(X^*Q)$ and setting
$\delta_\varepsilon X^\mu=X^*\rho(\varepsilon)(X^\mu)$
leads to a strong restriction on the Lie algebroids that can be used to gauge.  This is a consequence of the fact that a Lie algebroid structure does not naturally pullback in general.  However, there may be a natural Lie algebroid structure defined on $X^{**}Q$.  The induced Lie algebroid $X^{**}Q$ always exists when considering Lie algebroid gauging as $X:\Sigma \rightarrow M$ is an embedding.  If there is a Lie groupoid $\cG(Q)$ such that $Q=\Lie(\cG)$ then $\cG(X^{**}Q)$ will give a well defined Lie groupoid.

The pullback Lie algebroid is essential in defining the gauge algebroid used in the Lie algebroid gauging procedure.  The Lie algebroid bracket $[\cdot,\cdot]_{X^{**}Q}$ (defined via \eqref{PullbackBracket}) is induced from $[\cdot,\cdot]_Q$ and determines the Lie algebroid structure functions $\check{C}^i{}_{jk}\in C^\infty(\Sigma)$:
\begin{align*} [\check{e}_i,\check{e}_j]_{X^{**}Q}:=\check{C}^k{}_{ij}\check{e}_k,
\end{align*}
where $i,j,k=1,\dots,\dim(X^{**}Q)$ and $\{\check{e}_i \}$ gives a local frame for $X^{**}Q$.  Sections of $X^{**}Q$ are given by $\varepsilon=(\sigma,\epsilon)\in \Gamma(T\Sigma\oplus X^*Q)$.  The anchor $\check{\rho}:X^{**}Q\rightarrow T\Sigma$ is given by
\begin{align*}
\check{\rho}(\varepsilon)=\check{\rho}(\sigma,\epsilon):=\sigma.
\end{align*}
The variation $\delta_\varepsilon X=\check{\rho}(\varepsilon)X$ is now a well defined quantity, and 
\begin{align*}
[\delta_{\varepsilon_1},\delta_{\varepsilon_2}]X=\delta_{[\varepsilon_1,\varepsilon_2]}X\quad \Longrightarrow\quad [\check{\rho}(\varepsilon_1),\check{\rho}(\varepsilon_2)]_{T\Sigma}X=\check{\rho}([\varepsilon_1,\varepsilon_2]_{X^{**}Q})X.
\end{align*}
We conclude that $[\delta_{\varepsilon_1},\delta_{\varepsilon_2}](X)=\delta_{[\varepsilon_1,\varepsilon_2]}(X)$ is equivalent to the anchor homomorphism property of the Lie algebroid $(X^{**}Q,[\cdot,\cdot]_{X^{**}Q},\check{\rho})$. The variation $\delta_\varepsilon X$ is generated by infinitesimal diffeomorphisms on $\Sigma$ generated by $\check{\rho}(\varepsilon)\in\Gamma(T\Sigma)$.

The fields $(\Omega^\pm)^a{}_{\mu b}$ ($a,b=1,\dots,\dim(Q)$ and $\mu=1,\dots,\dim(M)$) define connections via $\nabla^\pm e_a:=(\Omega^\pm)^b{}_{a}\otimes e_b$.  We define $(X^*\Omega^\pm)^a{}_{\alpha b}$ by writing the connections $X^*\nabla^\pm$ (defined via \eqref{PullbackConnection}) on the basis $\{ X^*e_a\}$. We define a $T\Sigma$-connection on $X^{**}Q$ as follows:
\begin{align}
\check{\nabla}^\pm_{\sigma'} \sigma\oplus\epsilon:=\rho(\sigma')(\sigma)\oplus\epsilon+\sigma\oplus X^*\nabla^\pm_{\sigma'}\epsilon.
\end{align}
On the basis $\check{e}_i=\partial_\alpha\oplus X^*e_a$, we get the connection coefficients:
\begin{align*}
\check{\nabla}^\pm\check{e}_i=(\check{\Omega}^
\pm)^j{}_{i}\otimes \check{e}_j.
\end{align*}

With these definitions the variation of the gauge fields $A\in \Omega^1(\Sigma,X^{**}Q)$ is a well defined quantity. 

\begin{remark}
	Having established the correct gauge algebroid structure, we will henceforth omit the $\check{\cdot}$ with the relevant double pullback maps implied.  This will allow a more direct comparison to the formulae appearing in the literature on Lie algebroid gauging.\footnote{It will be seen in Section \ref{Sec:GaugeAlgebroid} that the main constraint of the construction (closure of the gauge algebroid) implies that the tensorial quantities $R_{{}^{X^{**}Q}\nabla^\pm}=0$.  The results of this paper can be stated in terms of structures on $Q$ after noting that $X^{**}(R_{{}^{Q}\nabla^\pm})=R_{{}^{X^{**}Q}\nabla^\pm}$.}~\\
\end{remark}
\subsection{Examples}~\\
The action $S_Q[X,A]$ includes the non-linear sigma models described by $S_{\text{KS}}[X,A]$ and $S_{\text{WZW}}[X,A]$ as well as allowing the possibility of gauging the standard non-linear sigma model with any integrable Lie algebroid.

\begin{example}[WZW]
	$S_{\text{WZW}}[X,A]$ can be described using $S_Q[X,A]$ by taking $M=\sfG$, $Q=T\sfG=\sfG\times \fg$. Choose the coframe $\d X=\eta=g^{-1}\d g$ and identify    
	\begin{align*}
	(E,\rho,C,H)=((G+B)_{\mu\nu}\eta^\mu\otimes \eta^\nu,\Ad_{g^{-1}},B,H),
	\end{align*}
	where $H=(g^{-1}\d g,[g^{-1}\d g,g^{-1}\d g]_\fg)_\sfG$ is given by $H=\d{}_\fg B$.
\end{example}
\begin{example}[Poisson sigma model]\label{PoissonModel}
	Define $B_{\mu\nu}:=A_\mu (\pi(A,\cdot))^{-1}_\nu$ where $\pi$ is a Poisson bivector; the action is given by 
	\begin{align*}
	S[X,A]=\frac{1}{2}\int_{\Sigma}G_{\mu\nu}DX^\mu\wedge \star DX^\nu+\int_{\Sigma}A_\mu\wedge \d X^\mu+\frac{1}{2}\pi^{\mu\nu}A_\mu\wedge A_\nu,
	\end{align*}
	where $DX=\d X-\pi(A,\cdot)$.  The Lie algebroid structure on $Q=T^*M$ is given by the Poisson cotangent Lie algebroid (Example \ref{PoissonCotangent}).  Integrable Poisson Lie algebroids are associated to symplectic groupoids \cite{Wei87,Cat01}. 
\end{example}
\begin{example}[Universal]\label{CourantLie}
	The `universal' action of Kotov--Strobl \cite{Cha17} is given by
	\begin{align*}
	S_{\text{univ}}[X,V,W]=\int_{\Sigma}\Big(\frac{1}{2}g_{\mu\nu}DX^\mu\wedge\star DX^\nu+W_\mu\wedge(\d X^\mu-\frac{1}{2}V^\mu)\Big)+\int_{\Sigma_3}H,
	\end{align*}
	where $DX^\mu=\d X^\mu-V^\mu$, $V\in\Omega^1(\Sigma,X^{**}T\Sigma)$ and $W\in\Omega^1(\Sigma,X^{**}T^*\Sigma)$.  This is equivalent to $S_Q[X,A]$ through the identification $V^\mu=\check{\rho}^\mu_aA^a$ and $ W_\mu=\check{C}_{\mu\nu}\check{\rho}^\nu_aA^a$.
	
	The Kotov--Strobl action and Poisson sigma model are special cases of the universal action. The Kotov--Strobl action is found by identifying
	$V^\mu=\rho^\mu_aA^a$, and $W_\mu=\alpha_{\mu a} A^a$. The Poisson sigma model (for $Q=T^*M$) is identified via $V^\mu=\pi^{\nu\mu}A_\nu$, and $W_\mu=A_\mu$. The universal action naturally interpolates between the Poisson sigma model and the Kotov--Strobl model.  The Lie algebroid is found by restricting the Lie bialgebroid constructed from $[\cdot,\cdot]$, $[\cdot,\cdot]_\pi$ to a (small) Dirac\footnote{A  small Dirac structure is an involutive and isotropic subbundle.  A Dirac structure is a small Dirac structure  of maximal dimension.} structure (see \cite{Kos07} for a description of Lie bialgebroids).
	
	The universal title refers to the fact each different choice of small Dirac structure on $TM\oplus T^*M$ gives a different model. 
\end{example}
\begin{example}[Foliation]\label{FoliationEx}
	Let $Q\subset TM$ be an involutive subbundle (a constant rank subbundle closed under the Lie bracket).  This defines a Lie algebroid (Example \ref{FoliatedLiealgebroid}).  
	
	An involutive linearly independent set of vector fields $v_a$ will satisfy (by definition)
	\begin{align*}
	[v_a,v_b]=C^c{}_{ab}v_c,\quad C^c{}_{ab}\in C^\infty(M),
	\end{align*}
	but in general $C^c{}_{ab}$ will not be constant.  Involutive foliated vector fields $Q=T\cF\subset TM$ are not generated by group actions in general. The integrability of smooth distributions is given by the Steffan--Sussmann conditions (see \cite{Ste73}). 
\end{example}

\begin{example}[Lie Groupoid]\label{GroupoidExample}
	Take any Lie groupoid $(\cG,M)$, and take the Lie algebroid $Q=\Lie(\cG)$.   The action $S_Q[X,A]$ provides a sigma model action. 
\end{example}

It is possible to consider sigma models for all the examples of Lie algebroids considered in Section \ref{Sec:LieAlgebroids} (Examples \eqref{LieAlgebraEx}--\eqref{PoissonCotangent}).  Integrability is guaranteed when you start with a Lie groupoid.  Examples of Lie groupoids can be found in Section \ref{Sec:LieAlgGroup} (Examples \eqref{LieGroupEx}--\eqref{SymplecticGroupoid}).

\begin{remark}
	We can consider topologically non-trivial examples by considering the gauge groupoid $\cG(P)$ (Example \ref{GaugeGroupoid}) corresponding to a topologically non-trivial principal bundle $P(M,\pi,\sfG)$.	
\end{remark}

It is clear that many natural examples of non-linear sigma models can be described using the action $S_Q[X,A]$.  The task remaining is to understand when such models can be gauged.


\section{Closure of gauge algebroid}\label{Sec:GaugeAlgebroid}
Closure of the gauge algebroid on the fields $X$ and $A$ is an important constraint on the Lie algebroid gauging procedure.  The gauged model will be equivalent to the original model if there exists a finite gauge transformation which sets the gauge field $A$ to zero. A necessary condition for the infinitesimal gauge transformations \eqref{GaugeAlg+} to integrate to finite gauge transformations is closure of the gauge algebroid.   Closure of the gauge algebroid does not follow automatically from \eqref{GaugeAlg+} and provides a real constraint to the application of Lie algebroid gauging.  We note that the formulae in this section have been calculated using the Lie algebroid $([\cdot,\cdot]_{X^{**}Q},\check{\rho})$; we have left the pullback maps implied throughout in order to better connect to the literature.

Closure of the gauge algebroid for the variation of the field $X$ gives the anchor homomorphism property of a Lie algebroid:
\begin{align}
[\delta_{\varepsilon_1},\delta_{\varepsilon_2}]X=\delta_{[\varepsilon_1,\varepsilon_2]_Q}X\quad \Longleftrightarrow \quad [\rho(\varepsilon_1),\rho(\varepsilon_2)]X=\rho([\varepsilon_1,\varepsilon_2]_Q)X.
\end{align}
Closure on the gauge field $A$ is more involved.  It is convenient to define $D_\pm X:=\frac{1}{2}(DX\pm\star DX)$.  We note that $D_\pm X^\mu$ transforms nicely 
\begin{align*}
\delta_\varepsilon D_\pm X^\mu=&\varepsilon^a(\partial_\nu\rho^\mu_a-\rho^\mu_b(\Omega^\pm)^b{}_{\nu a})D_\pm X^\nu.
\end{align*}
A straightforward but lengthy calculation gives: 
\begin{align}\label{Eq:ClosureComponents}
([\delta_{\varepsilon_1},\delta_{\varepsilon_2}]-\delta_{[\varepsilon_1,\varepsilon_2]_Q})A^a
=&\varepsilon^b_1\varepsilon^c_2(-\nabla^-_\mu (T_{\nabla^-})^a{}_{bc}+2\rho^\nu_{[b|}(R_{\nabla^-})^a{}_{\nu \mu| c]})D_-X^\mu\\
+&\varepsilon^b_1\varepsilon^c_2(-\nabla^+_\mu (T_{\nabla^+})^a{}_{bc}+2\rho^\nu_{[b|}(R_{\nabla^+})^a{}_{\nu \mu |c]})D_+X^\mu,\nonumber
\end{align}
where 
\begin{align*}
(T_{\nabla^\pm})^a{}_{bc}=&\rho^\mu_b(\Omega^\pm)^a{}_{\mu c}-\rho^\mu_c(\Omega^\pm)^a{}_{\mu b}-C^a{}_{bc};\\
(R_{\nabla^\pm})^a{}_{\mu\nu b}=&2\partial_{[\mu}(\Omega^\pm)^a{}_{\nu]b}+2(\Omega^\pm)^a{}_{[\mu| c}(\Omega^\pm)^c{}_{|\nu] b};\\
(\nabla^\pm_{\mu}T_{\nabla})^a{}_{bc}=&\partial_\mu (T_{\nabla})^a{}_{bc}+(T_\nabla)^d{}_{bc}(\Omega^\pm)^a{}_{\mu d}-(T_\nabla)^a{}_{dc}(\Omega^\pm)^d{}_{\mu b}-(T_\nabla)^a{}_{bd}(\Omega^\pm)^d{}_{\mu c}.
\end{align*}
The closure constraints for the special case $\Omega^+=\Omega^-$ have appeared in the literature before \cite{May08}.\footnote{We replace the Lie bracket $[\varepsilon_1,\varepsilon_2]_{X^*Q}$ with the Lie algebroid bracket $[\varepsilon_1,\varepsilon_2]_{X^{**}Q}$ but the expression is formally the same.}

Closure of the gauge algebroid requires that either $D_\pm X\equiv 0$ or $\nabla^\pm_\mu (T_{\nabla^\pm})^a{}_{bc}=2\rho^\nu_{[b|}(R_{\nabla^\pm})^a{}_{\nu \mu |c]}$.  If $D_\pm X\equiv 0$ the Lagrangian is zero.  While it may be possible to consider $D_\pm X=0$ as an `on-shell' condition it is not clear that this is a natural constraint physically. In the standard case of Lie algebra gauging the gauge algebra holds identically without assuming $D_\pm X^\mu=0$.  In addition, quantisation requires `off-shell' closure of the gauge algebroid.  It makes sense to require closure for all $X:\Sigma\rightarrow M$, including $D_\pm X\neq 0$.  With this assumption it follows that
\begin{equation}\label{Qflat1}
([\delta_{\varepsilon_1},\delta_{\varepsilon_2}]-\delta_{[\varepsilon_1,\varepsilon_2]_Q})A^a=0\quad \Longleftrightarrow \quad \nabla^\pm_\mu (T_{\nabla^\pm})^a{}_{bc }=2\rho^\nu_{[b|}(R_{\nabla^\pm})^a{}_{\nu\mu|c]}.
\end{equation}

Closure of the gauge algebroid places a curious constraint on the connections $\nabla^\pm$---a constraint that is not readily interpreted.  When $\nabla^\pm$ are flat connections, the closure of the gauge algebra acquires a nice interpretation. In this case  $\nabla^\pm (T_{\nabla^\pm})^a{}_{bc}=0$, which is equivalent to the fact that covariantly constant sections are closed on $[\cdot\ ,\cdot ]_Q$ i.e.
\begin{align*}
(\nabla^\pm q_1=0\ \&\ \nabla^\pm q_2=0\quad \Rightarrow\quad \nabla^\pm [q_1,q_2]_Q=0)\quad \Longleftrightarrow\quad  \nabla^\pm_\mu (T_{\nabla^\pm})^a{}_{bc}=0,
\end{align*}
for all $q_1,q_2\in\Gamma(Q)$.  The constraint for $R_{\nabla^\pm}\neq 0$ is best understood by `lifting' the $TM$-connections on $Q$ to $Q$-connections on $Q$:  
\begin{align}\label{Qconnection}
{}^Q\nabla^\pm_{q_1}q_2:=\nabla^\pm_{\rho(q_2)}q_1+[q_1,q_2]_Q.
\end{align}	
The adjoint connection, Equation \eqref{Qconnection}, is well known in the mathematics literature and plays an important role in integrating $Q$-paths in a Lie groupoid \cite{Cra05}.  In a local frame for $Q$, specified by a basis $\{ e_a \}$, the components of the adjoint connections are given by
\begin{align}\label{Qcomponents}
({}^Q\Omega^\pm)^a{}_{bc}=\rho^\mu_c(\Omega^\pm)^a{}_{\mu b}+C^a{}_{bc}.
\end{align}	

The torsion and curvature of the $Q$-connections ${}^Q\nabla^\pm$ can be related to the torsion and curvature of the $Q$-connections $\overline{\nabla}^\pm_{q_1}q_2:=\nabla^\pm_{\rho(q_1)}q_2$, yielding the following: 
\begin{align*}
T_{{}^Q\nabla^\pm}(q_1,q_2)=&-T_{\overline{\nabla}^\pm}(q_1,q_2),\\	
R_{{}^Q\nabla^\pm}(q_1,q_2)(q_3)=&R_{\overline{\nabla}^\pm}(q_1,q_3)q_2-R_{\overline{\nabla}^\pm}(q_2,q_3)q_1\\
&+\overline{\nabla}^\pm_{q_3} T_{\nabla^\pm}(q_1,q_2)-T_{\overline{\nabla}^\pm}(\overline{\nabla}^\pm_{q_3} q_1,q_2)-T_{\overline{\nabla}^\pm}(q_1,\overline{\nabla}^\pm_{q_3} q_2).
\end{align*}

Calculating $R_{{}^Q\nabla^\pm}$ in local coordinates produces the algebroid closure constraint \eqref{Qflat1}.  In summary: 
\begin{align}\label{Eq:FlatnessConstraint}
([\delta_{\varepsilon_1},\delta_{\varepsilon_2}]-\delta_{[\varepsilon_1,\varepsilon_2]_Q})A=0\quad \Longrightarrow \quad R_{{}^Q\nabla^\pm}=0.	
\end{align}

In \cite{Kot15} it has been noted that for the special case $\Omega^+=\Omega^-$ the closure of the gauge algebroid implies the $Q$-flat condition.  

\begin{remark}
If $\rho$ has a non-trivial kernel then $R_{{}^Q\nabla^\pm}=0$ does not necessarily imply closure of the gauge algebroid. If $\{\rho_a \}$ are linearly independent we  can define $Q\cong T\cF\subset TM$ to be the span of $\{\rho_a \}$. In this case $\rho$ has no kernel and $R_{{}^Q\nabla^\pm}=0$ implies the closure of the gauge algebroid. 
\end{remark}

It follows from Theorem \ref{GroupoidMorphismThrm} that ${}^{X^{**}Q}\nabla^\pm$ will define representations of Lie algebroids corresponding to Lie groupoids $\cG^\pm(X^{**}Q)$ if ${}^{Q}\nabla^\pm$ define a representation of  $\cG^\pm(Q)$.  Thus it is sufficient to study the flatness of the connections ${}^Q\nabla^\pm$.  Alternatively we come to the same conclusion by noting that $R_{{}^Q\nabla^\pm}$ are tensors, so  $R_{X^{**}{}^Q\nabla^\pm}=X^{**}R_{{}^Q\nabla^\pm}$, and it is sufficient to check the flatness of ${}^Q\nabla^\pm$. 

Closure of the gauge algebra implies the existence of two flat $Q$-connections on $Q$.  These connections define representations of Lie algebroids on $Q$. The Lie algebroids define paths associated to the Weinstein groupoid (as outlined in Section \ref{Sec:LieAlgGroup}).  The paths define the orbits describing our gauge symmetry.  

If ${}^Q\nabla^\pm$ are not flat then we do not have a representation of a Lie algebra.  The action of our algebroid is not even locally integrable. It seems that the notion of gauging is not well defined in this case.

We close this section with the observation that the flatness condition $R_{{}^Q\nabla^\pm}=0$ implies that ${}^Q\Omega^\pm$ are Maurer--Cartan forms for the frame bundle $\cB(Q)$ over $M$.  It follows that 
\begin{align}\label{FormofQflat}
{}^Q\Omega^\pm=K_\pm^{-1} \d{}_QK_\pm,\quad ({}^Q\Omega^\pm)^a{}_{ bc}=(K^{-1}_\pm)^a{}_{d}\rho^\mu_b\partial_\mu(K_\pm)^d{}_{c},
\end{align}	
for some $K_\pm\in C^\infty(M,\sfGL(d))$ (where $d=\dim(Q)$) and $\d{}_Q:C^\infty(M)\rightarrow \Omega^1(M)$ is given by $e^a\rho^\mu_a\partial_\mu$, for some local basis $\{e^a \}$ for $Q^*$.

\section{Solving the generalised Killing equation}\label{Sec:GenSolution}

In Section \ref{Sec:GaugeAlgebroid} it was shown that closure of the gauge algebroid on the gauge field $A$ implies that $R_{{}^Q\nabla^\pm}=0$.  This constraint allows an explicit construction of solutions to the gauging constraints \eqref{Ecompatibility} and \eqref{Eq:DefineC}. This section derives the necessary and sufficient conditions on an involutive set of vector fields in order to satisfy the generalised Killing equation and result in a closed gauge algebroid. These conditions will be important for determining the extent to which Lie algebroid gauging is applicable to non-linear sigma models. 

First we will consider the case of gauging with respect to a set of linearly independent vector fields $\{\rho_a \}$, $a=1,\dots,k$ where $k\leq \dim(M)$.  In such a case $\rank(\text{Im}(\rho))=k$ and corresponds to gauging with respect to a regular Lie algebroid.  The general case, allowing $\rank(\text{Im}(\rho))$ to vary at different points in $M$, is treated separately in Section \ref{Sec:GaugingGenVecFields}.  Section \ref{Sec:Examples} gives examples of Lie algebroid gauging, demonstrating the results of this chapter.

We note that Equation \eqref{Eq:DefineC} can be used to determine a choice of field $C\in\Omega^2(M)$ and does not pose a constraint to gauging.  Our first task is to find the general solution to the generalised Killing equation \eqref{Ecompatibility}.  Written in matrix form the generalised Killing condition is given by a system of matrix equations
\begin{align}
\cL_{\rho_a}E=&E\rho\Omega^+_a+E^T\rho\Omega^-_a.
\end{align}
For each $a=1,\dots,k$ this is a linear matrix equation and a solution for $\Omega^+_a$ exists if and only if
\begin{align}\label{InvertibleGen}
\cL_{\rho_a}E-E^T\rho\Omega^-_a=E\rho(E\rho)^+(\cL_{\rho_a}E-E^T\rho\Omega^-_a),
\end{align}
for each $a$, where $(E\rho)^+$ denotes the Moore--Penrose pseudo-inverse of $E\rho$.  If this constraint is satisfied the solution $\Omega^+_a$ is given by
\begin{align}
\Omega^+_a=&(E\rho)^+(\cL_{\rho_a}E-E^T\rho\Omega^-_a)+(I-(E\rho)^+E\rho)X_a,\label{eq:GenSolutionOmega+}
\end{align}
where $I$ is the $k\times k$ identity matrix, and  $X_a\in C^\infty(M,\RR^{k\times n})$ is arbitrary.\footnote{Remember that $a,b,c=1,\dots,k$ and $\mu,\nu,\lambda=1,\dots,n=\dim(M)$.}  In explicit index notation we have
\begin{align*}
(\Omega^+_a)^b{}_{\mu}=&((E\rho)^+)^b{}_{\lambda}(\cL_{\rho_a}E)^\lambda{}_{\mu}-((E\rho)^+)^b{}_{\lambda}E_{\kappa\lambda}\rho^\kappa_c(\Omega^-)^c{}_{\mu a}\\
&+(\delta^b{}_{c}-((E\rho)^+)^b{}_{\lambda}(E\rho)^\lambda_{c})(X_a)^c{}_{\mu}.
\end{align*}

\begin{theorem}\label{Th:GaugeThrm}
	Given a field $E\in\Gamma(T^*M\otimes T^*M)$ and a choice of involutive vector fields $\rho_a\in\Gamma(TM)$ ($a=1,\dots,k$) defining a Lie algebroid $[\rho_a,\rho_b]=C^c{}_{ab}\rho_c$, the generalised Killing equation
	\begin{align*}
	(\cL_{\rho_a}E)_{\mu\nu}=&E_{\mu\lambda}\rho^\lambda_b(\Omega^+)^b{}_{\nu a}+E_{\lambda\nu}\rho^\lambda_b(\Omega^-)^b{}_{\mu a},
	\end{align*}
	has solutions for $\Omega^\pm\in\Gamma(T^*M\otimes Q)$ if and only if 
	\begin{align}
	\cL_{\rho_a}E-E^T\rho\Omega^-_a=E\rho(E\rho)^+(\cL_{\rho_a}E-E^T\rho\Omega^-_a)
	\end{align}
	holds for some set $\{\Omega^-_a\}$.  If there exists a set $\{\rho_a,\Omega^-_a\}$, satisfying the stated conditions, then solutions are given by 
	\begin{align}\label{Eq:Necessary}
	\Omega^+_a=&(E\rho)^+(\cL_{\rho_a}E-E^T\rho\Omega^-_a)+(I-(E\rho)^+E\rho)X_a,
	\end{align}
	where $X_a\in C^\infty(M,\RR^{k\times n})$ is arbitrary.  In addition, any solution $\Omega^+_a$ satisfies \eqref{Eq:Necessary}. 
\end{theorem}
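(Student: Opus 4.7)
The plan is to reduce the generalised Killing equation to a standard linear matrix equation (for each fixed $a$) and then invoke the Moore--Penrose characterisation of its solvability and solution set.

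First I would fix the index $a$ and rewrite the equation
\begin{align*}
(\cL_{\rho_a}E)_{\mu\nu}
= E_{\mu\lambda}\rho^\lambda_b(\Omega^+)^b{}_{\nu a}
+ E_{\lambda\nu}\rho^\lambda_b(\Omega^-)^b{}_{\mu a}
\end{align*}
in matrix form by viewing $\Omega^\pm_a$ as $k\times n$ matrices with entries $(\Omega^\pm_a)^b{}_\nu:=(\Omega^\pm)^b{}_{\nu a}$, and viewing $E\rho$ as an $n\times k$ matrix with entries $(E\rho)_{\mu b}=E_{\mu\lambda}\rho^\lambda_b$. Rearranging by moving the $\Omega^-$ term to the left-hand side yields
\begin{align*}
(E\rho)\,\Omega^+_a \;=\; \cL_{\rho_a}E - E^{T}\rho\,\Omega^-_a,
\end{align*}
i.e.\ for each $a$ we have a linear matrix equation $A\,\Omega^+_a=B_a$ with $A:=E\rho$ and $B_a$ the RHS above. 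Observe that $\Omega^-_a$ enters only as a parameter on the RHS, so the problem genuinely decouples into $k$ independent linear systems of the form $Ax=b$.

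Next I would invoke the classical Moore--Penrose criterion: the equation $Ax=b$ has a solution if and only if $A A^{+} b = b$, and in that case the general solution is $x = A^{+}b + (I - A^{+}A)y$ for arbitrary $y$ of the right shape. Applying this with $A=E\rho$ and $b=B_a$ immediately gives the existence criterion
\begin{align*}
\cL_{\rho_a}E - E^{T}\rho\,\Omega^-_a \;=\; E\rho\,(E\rho)^{+}\bigl(\cL_{\rho_a}E - E^{T}\rho\,\Omega^-_a\bigr),
\end{align*}
and the general solution
\begin{align*}
\Omega^+_a \;=\; (E\rho)^{+}\bigl(\cL_{\rho_a}E - E^{T}\rho\,\Omega^-_a\bigr) + \bigl(I - (E\rho)^{+}E\rho\bigr) X_a
\end{align*}
with $X_a$ an arbitrary $k\times n$ matrix-valued function on $M$, exactly as claimed. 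The final assertion that every solution is of this form is the standard statement that $(I - A^{+}A)$ projects onto $\ker A$, so adjusting $X_a$ realises any kernel element.

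The only substantive points to check are bookkeeping ones: that the transpose in the $\Omega^-$ term is handled correctly when passing to matrix notation, that the pseudo-inverse $(E\rho)^{+}$ is a well-defined smooth object pointwise on $M$ (which it is, since the Moore--Penrose inverse depends continuously on entries away from rank drops and we may work locally on the open set where $\rank(E\rho)$ is locally constant), and that the $k$ separate systems (indexed by $a$) do not interact. None of these presents a genuine obstacle; the result is essentially a restatement of the linear-algebraic fact applied pointwise. I anticipate the main subtlety is not mathematical but notational: keeping track of which indices of $\Omega^\pm_a$ are ``matrix indices'' versus the frozen index $a$, and this is why the statement is presented with the explicit index expression appearing after the matrix form.
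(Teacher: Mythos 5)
Your proposal is correct and follows essentially the same route as the paper: the paper likewise rewrites the generalised Killing equation as the matrix equation $\cL_{\rho_a}E = E\rho\,\Omega^+_a + E^T\rho\,\Omega^-_a$ for each fixed $a$, moves the $\Omega^-$ term to the left, and applies the standard Moore--Penrose solvability criterion and solution formula for $Ax=b$. The bookkeeping points you flag (the transpose on the $\Omega^-$ term and pointwise smoothness of the pseudo-inverse) are exactly the ones the paper also treats as routine.
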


It appears that the general conditions for solving the generalised Killing equation have not appeared in the literature before.  In \cite{Cha16b} a particular solution for $\phi$ and $\omega$ was found in the special case that $(\alpha^*+\rho)^{-1}$ exists.\footnote{We refer the reader to the paper \cite{Cha16b} for notation and details.}  It follows from Theorem \ref{Th:GaugeThrm} that a family of solutions exist in this case, with the given choice being a particular solution.  No statement was made about the closure of the gauge algebroid for this choice, nor the existence of solutions when $(\alpha^*+\rho)^{-1}$ does not exist.  

Having found the general solution to the generalised Killing equation \eqref{Ecompatibility} we now turn our attention to those which result in a closed gauge algebroid.  These are gaugings which are at least locally integrable.  We will find $\Omega^\pm_a$ which satisfy the generalised Killing equation and give ${}^Q\Omega^\pm_a$ which define flat connections ${}^Q\nabla^\pm$.  ~\\

\subsection{Gauging with linearly independent vector fields}~\\
In this section solutions to the gauging constraints (Equations \eqref{Ecompatibility} and \eqref{Eq:FlatnessConstraint}) for an arbitrary choice of linearly independent involutive vector fields $\{\rho_a\}$ where $a=1,\dots,k$ and $k\leq \dim(M)$.  This allows us to describe gauging with respect to regular Lie algebroids $[\rho_a,\rho_b]=C^c{}_{ab}\rho_c$.  The components $\rho^\mu{}_a$ define a $n\times k$ matrix. As a consequence of the linear independence of $\{\rho_a\}$, the columns of $\rho$ are linearly independent and $\rho^T\rho$ is invertible.  In this case we have an explicit formula for the pseudo-inverse  
\begin{align*}
\rho^+=(\rho^T\rho)^{-1}\rho^T.
\end{align*}
It follows that $\rho^+$ is a \emph{left-inverse} for $\rho$ i.e. $\rho^+\rho=I$ the $k\times k$ identity matrix.

Throughout this chapter we assume that $E$ is invertible (this is true for almost all physical models) and hence satisfies $\rank(E)=n$.  As $E$ defines an endomorphism it follows that $\rank(E\rho)=\rank(\rho)$. $E\rho$ is a $n\times k$ matrix, and has linearly independent columns 
\begin{align*}
(E\rho)^+=(\rho^TE^TE\rho)^{-1}\rho^TE^T
\end{align*}
and $(E\rho)^+$ is a \emph{left-inverse} for $E\rho$
\begin{align}
(E\rho)^+E\rho=I.
\end{align}

Assuming that $\{\rho_a \}$ are linearly independent simplifies the general solution to the generalised Killing equation \eqref{Ecompatibility}: 
\begin{align}\label{eq:GenKillingEqnSoln}
\Omega^+_a=&(E\rho)^+(\cL_{\rho_a}E-E^T\rho\Omega^-_a).
\end{align}
When condition \eqref{InvertibleGen} holds, the generalised Killing equation has solutions, and they are given by \eqref{eq:GenKillingEqnSoln}.

In order to impose the $Q$-flatness condition on ${}^Q\nabla^\pm$ it is useful to get an expression for the components $({}^Q\Omega^+_a)^b{}_{c}$. Using \eqref{Qcomponents} we have:
\begin{align*}
({}^Q\Omega^+_a)^b{}_{c}=&(\Omega^+_a)^b{}_{\mu}\rho^\mu_c+C^b{}_{ac}\\
=&(E\rho)^+(\cL_{\rho_a}E-E^T\rho\Omega^-_a)^b{}_{\mu}\rho^\mu_c+C^b{}_{ac}\\
=&((E\rho)^+)^b{}_{\lambda}(\cL_{\rho_a}E)_{\lambda\mu}\rho^\mu_c-((E\rho)^+)^b{}_{\lambda}E_{\kappa\lambda}\rho^\kappa_d(\Omega^-_a)^d{}_{\mu}\rho^\mu_c+C^b{}_{ac}\\
=&((E\rho)^+)^b{}_{\lambda}(\cL_{\rho_a}E)_{\lambda\mu}\rho^\mu_c-((E\rho)^+)^b{}_{\lambda}E_{\kappa\lambda}\rho^\kappa_d(({}^Q\Omega^-)^d{}_{ac}-C^d{}_{ac})+C^b{}_{ac}.
\end{align*}
This expression can be simplified: 
\begin{lemma} The following identity holds:
	\begin{align}\label{eq:Lemma1}
	(\cL_{\rho_a}E)_{\lambda\mu}\rho^\mu_c=(\rho^+)^d_\lambda\rho^\kappa_a\partial_\kappa(\rho^\nu_dE_{\nu\mu}\rho^\mu_c)-(\rho^+)^e_\lambda C^d{}_{ae}\rho^\kappa_dE_{\kappa\mu}\rho^\mu_c-E_{\lambda\mu}C^d{}_{ac}\rho^\mu_d.
	\end{align}
\end{lemma}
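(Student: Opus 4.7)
\emph{Proof plan.} The plan is to verify the identity by expanding both sides in coordinates, using the Leibniz property of the Lie derivative together with the bracket identity $\rho^\kappa_a\partial_\kappa\rho^\nu_d-\rho^\kappa_d\partial_\kappa\rho^\nu_a=C^e{}_{ad}\rho^\nu_e$ that encodes $[\rho_a,\rho_d]=C^e{}_{ad}\rho_e$. The cleanest entry point is the scalar $F_{dc}:=\rho^\nu_d E_{\nu\mu}\rho^\mu_c=E(\rho_d,\rho_c)$; applying Leibniz to the Lie derivative $\cL_{\rho_a}$ of this scalar (and using $\cL_{\rho_a}\rho_d=C^e{}_{ad}\rho_e$, and similarly for $\rho_c$) yields
\begin{align*}
\rho^\kappa_a\partial_\kappa F_{dc}=\rho^\nu_d(\cL_{\rho_a}E)_{\nu\mu}\rho^\mu_c+C^e{}_{ad}F_{ec}+C^e{}_{ac}F_{de}.
\end{align*}

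Next I would solve this relation for $\rho^\nu_d(\cL_{\rho_a}E)_{\nu\mu}\rho^\mu_c$ and contract with the left-inverse $(\rho^+)^d_\lambda$. Writing $\Pi^\nu_\lambda:=\rho^\nu_d(\rho^+)^d_\lambda$ for the projector onto the span of $\{\rho_a\}$, the resulting expression reproduces, after a dummy-index swap $d\leftrightarrow e$, the first two terms on the RHS of the claimed identity, while the $C^e{}_{ac}F_{de}$ piece produces $-C^e{}_{ac}\Pi^\nu_\lambda E_{\nu\mu}\rho^\mu_e$, which is to be matched against $-E_{\lambda\mu}C^d{}_{ac}\rho^\mu_d$.

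The main hurdle is the projector: the scalar-based derivation naturally produces $\Pi^\nu_\lambda(\cL_{\rho_a}E)_{\nu\mu}\rho^\mu_c$ and $\Pi^\nu_\lambda E_{\nu\mu}\rho^\mu_e$ rather than their unprojected counterparts, and the difference is proportional to $Q^\nu_\lambda[\cL_{\rho_a}(\iota_{\rho_c}E)]_\nu$ with $Q:=I-\Pi$. This vanishes outright when $\rho$ spans $TM$ (so $Q=0$); in the general case one closes the argument by observing that the lemma is applied in the derivation of $({}^Q\Omega^+_a)^b{}_c$ after contraction with $((E\rho)^+)^b_\lambda$, where the left-inverse identity $((E\rho)^+)^b_\lambda E_{\lambda\mu}\rho^\mu_d=\delta^b_d$ absorbs the $Q$-projection and recovers the expected expression. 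Keeping the index bookkeeping straight through these projector manipulations is the delicate part of the proof; everything else is formal rearrangement using Leibniz and the bracket relations.
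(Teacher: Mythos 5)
Your central computation is sound and is in essence the same calculation as the paper's: the paper expands $(\cL_{\rho_a}E)_{\lambda\mu}\rho^\mu_c$ by Leibniz and the bracket relation and then inserts $\rho\rho^+$, while you apply Leibniz to the fully contracted scalar $F_{dc}=E(\rho_d,\rho_c)$ and then contract with $(\rho^+)^d_\lambda$. Both routes land on the identity with the projector $\Pi^\nu{}_\lambda=\rho^\nu_d(\rho^+)^d_\lambda$ sitting in front of $(\cL_{\rho_a}E)_{\nu\mu}\rho^\mu_c$ and of $E_{\nu\mu}\rho^\mu_e$, and you are right that this reduces to the stated identity when $k=\dim M$, since then $\Pi=I$.

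However, your proposed way of discharging the projector when $k<\dim M$ does not work. The left-inverse identity $(E\rho)^+E\rho=I$ annihilates terms of the form $E\rho(\cdot)$ --- that is indeed how the third term $-E_{\lambda\mu}C^d{}_{ac}\rho^\mu_d$ gets absorbed against $+C^b{}_{ac}$ in the subsequent derivation of $({}^Q\Omega^+_a)^b{}_c$ --- but it does not annihilate the discrepancy $Q^\nu{}_\lambda\,[\cL_{\rho_a}(\iota_{\rho_c}E)]_\nu$ with $Q=I-\Pi$. For that you would need $((E\rho)^+)^b{}_\lambda Q^\nu{}_\lambda=0$, i.e.\ $\rho\rho^+E\rho=E\rho$, which is the extra condition that the columns of $E\rho$ lie in $\mathrm{span}\{\rho_a\}$; it is not a consequence of left-invertibility. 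A minimal counterexample: $M=\RR^2$, $k=1$, $\rho_1=\partial_x$, so $C=0$ and $\Pi=\mathrm{diag}(1,0)$; if $E_{yx}$ depends on $x$, the left-hand side of the lemma at $\lambda=y$ is $\partial_xE_{yx}\neq0$ while the right-hand side vanishes, and contracting with $(E\rho)^+\propto(E_{xx},E_{yx})$ still leaves a nonzero multiple of $E_{yx}\partial_xE_{yx}$. So the unprojected identity genuinely requires the hypothesis $\Pi^\nu{}_\lambda E_{\nu\mu}\rho^\mu_c=E_{\lambda\mu}\rho^\mu_c$, i.e.\ $\iota_{\rho_c}E\in\mathrm{span}\{\rho_d\}$. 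You have in fact isolated a real issue rather than resolved it: the paper's own proof imports this assumption silently at the step $E_{\lambda\mu}\rho^\mu_c=(\rho^+)^d_\lambda\rho^\nu_dE_{\nu\mu}\rho^\mu_c$, which is justified there by ``$\rho^+\rho=I$'' but actually uses $\rho\rho^+$ acting as the identity on $\iota_{\rho_c}E$. To close your argument you should either restrict to $k=\dim M$ (the situation of Corollary \ref{Cor:LocalGauging}), add the hypothesis above explicitly, or state and use only the $\Pi$-projected version of the identity.
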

\begin{proof}
	Recall that $[\rho_a,\rho_b]:=C^c{}_{ab}\rho_c$. By direct computation
	\begin{align}
	(\cL_{\rho_a}E)_{\lambda\mu}\rho^\mu_c=&\rho^\kappa_a(\partial_\kappa E_{\lambda\mu})\rho^\mu_c+(\partial_\lambda \rho^\kappa_a)E_{\kappa\mu}\rho^\mu_c+(\partial_\mu \rho^\kappa_a)E_{\lambda\kappa}\rho^\mu_c\nonumber\\
	=&\rho_a^\kappa(\partial_\kappa E_{\lambda\mu}\rho^\mu_c)-E_{\lambda\mu}(\rho^\kappa_a\partial_\kappa\rho^\mu_c-\rho^\kappa_c\partial_\kappa\rho^\mu_a)+(\partial_\lambda\rho^\kappa_a)E_{\kappa\mu}\rho^\mu_c\nonumber\\
	=&\rho_a^\kappa(\partial_\kappa E_{\lambda\mu}\rho^\mu_c)-E_{\lambda\mu}([\rho_a,\rho_c])^\mu+(\partial_\lambda\rho^\kappa_a)E_{\kappa\mu}\rho^\mu_c\nonumber\\
	=&\rho_a^\kappa(\partial_\kappa E_{\lambda\mu}\rho^\mu_c)-E_{\lambda\mu}C^d{}_{ac}\rho^\mu_d+(\partial_\lambda\rho^\kappa_a)E_{\kappa\mu}\rho^\mu_c.\label{eq:LemmaId1}
	\end{align}
	Next we use the fact that $\rho^+\rho=I$, 
	\begin{align}\label{eq:LemmaId2}
	\rho^\kappa_a\partial_\kappa (E_{\lambda \mu}\rho^\mu_c)=&\rho^\kappa_a\partial_\kappa((\rho^+)^d_\lambda\rho^\nu_d E_{\nu \mu}\rho^\mu_c)=\rho^\kappa_a\partial_\kappa((\rho^+)^d_\lambda)\rho^\nu_d E_{\nu \mu}\rho^\mu_c+\rho^\kappa_a(\rho^+)^d_\lambda\partial_\kappa(\rho^\nu_d E_{\nu \mu}\rho^\mu_c).
	\end{align} 
	Using $d(\rho^+\rho)=dI=0$ to conclude that $(d\rho^+)\rho=-\rho^+d\rho$, we have
	\begin{align}
	\rho^\kappa_a\partial_\kappa((\rho^+)^d_\lambda)\rho^\nu_d E_{\nu \mu}\rho^\mu_c=&-(\rho^+)^d_\lambda\rho^\kappa_a\partial_\kappa\rho^\nu_d E_{\nu \mu}\rho^\mu_c\nonumber\\
	=&(\rho^+)^d_\lambda C^e{}_{da}\rho^\nu_e E_{\nu\mu}\rho^\mu_c-(\rho^+)^d_\lambda\rho^\kappa_d\partial_\kappa\rho^\nu_a E_{\nu\mu}\rho^\mu_c\nonumber\\
	=&(\rho^+)^e_\lambda C^d{}_{ea}\rho^\kappa_d E_{\kappa\mu}\rho^\mu_c-(\partial_\lambda\rho^\nu_a) E_{\nu\mu}\rho^\mu_c.\label{eq:LemmaId3}
	\end{align}
	Combining \eqref{eq:LemmaId1}, \eqref{eq:LemmaId2} and \eqref{eq:LemmaId3} gives the desired result \eqref{eq:Lemma1}.
\end{proof}
Substituting \eqref{eq:Lemma1} in the expression for ${}^Q\Omega^+_a$ gives
\begin{align*}
({}^Q\Omega^+_a)^b{}_{c}=&((E\rho)^+)^b{}_{\lambda}(\rho^+)^d_\lambda\rho^\kappa_a\partial_\kappa( \rho^\nu_dE_{\nu\mu}\rho^\mu_c)+(\delta^b{}_d-((E\rho)^+E\rho)^b{}_d)C^d{}_{ac}\\
&-((E\rho)^+)^b{}_{\lambda}E_{\kappa\lambda}\rho^\kappa_d({}^Q\Omega^-)^d{}_{ac}\\
=&((\rho^T E\rho)^+)^b{}_{\lambda}\rho^\kappa_a\partial_\kappa(\rho^T E\rho)^\lambda{}_c-(E\rho)^+E^T\rho({}^Q\Omega^-_a)^{b}{}_{c}.
\end{align*}
This can be written more succinctly as
\begin{align}\label{eq:QOmega+form}
{}^Q\Omega^+_a=(\rho^T E\rho)^+\rho_a(\rho^T E\rho)-(E\rho)^+E^T\rho{}^Q\Omega^-_a.
\end{align}
Whenever \eqref{InvertibleGen} holds there exist solutions to the gauging constraint \eqref{Ecompatibility} with the general solution given by \eqref{eq:QOmega+form}.  

We wish to make a choice of connection coefficients ${}^Q\Omega^-$ such that ${}^Q\nabla^\pm$ are flat.  If ${}^Q\nabla^-$ is flat there exists a choice of frame $\tilde{e}_b$ such that $({}^Q\widetilde{\Omega}^-)^a{}_{bc}=0$.  Take $N\in \End(Q)$, such that $\tilde{e}_b=N^a{}_{b}e_a$.  Now $\tilde{\rho}_b=N^a{}_{b}\rho_a$ and in this frame 
\begin{align}
{}^Q\widetilde{\Omega}^+_a=(\tilde{\rho}^TE\tilde{\rho})^+\tilde{\rho}_a(\tilde{\rho}^TE\tilde{\rho}).
\end{align} 
Recall from Equation \eqref{FormofQflat} that the coefficients ${}^Q\Omega^\pm_a$ define flat connections ${}^Q\nabla^\pm$ if and only if they are of the form 
\begin{align}\label{eq:QFlatForm}
{}^Q\Omega^\pm_a=(K_\pm)^{-1}\rho_a(K_\pm)=(K_\pm)^{-1}\rho^\mu_a\partial_\mu K_\pm,
\end{align}
for some $K_\pm\in C^\infty(M,\sfGL(k))$.  We require that 
\begin{align*}
{}^Q\widetilde{\Omega}^+_a=(\widetilde{K}_+)^{-1}\tilde{\rho}_a(\widetilde{K}_+)=(\tilde{\rho}^TE\tilde{\rho})^+\tilde{\rho}_a(\tilde{\rho}^TE\tilde{\rho}),
\end{align*}
which is satisfied if $\widetilde{K}_+=\tilde{\rho}^TE\tilde{\rho}$ and $\tilde{\rho}^TE\tilde{\rho}$ is invertible.  This provides another necessary condition for ${}^Q\widetilde{\Omega}^+_a$ to be flat.  Furthermore we note that
\begin{align*}
\rank(\tilde{\rho}^TE\tilde{\rho})=\rank(\rho^TE\rho),
\end{align*}
as $\tilde{\rho}_a$ and $\rho_{b}$ are related by $N\in\End(Q)$.  If ${}^Q\nabla^+$ is flat then $(\rho^TE\rho)$ is invertible, and $(\rho^T E\rho)^+=(\rho^T E\rho)^{-1}$. Returning to \eqref{eq:QOmega+form} we find    
\begin{align}
{}^Q\Omega^+_a=(\rho^T E\rho)^{-1}\rho_a(\rho^T E\rho)-(E\rho)^+E^T\rho{}^Q\Omega^-_a,
\end{align}
and have a solution for ${}^Q\Omega^-_a=0$, ${}^Q\Omega^+=(K_+)^{-1}\rho^\mu_a\partial_\mu K_+$ with $K_+=\rho^TE\rho$.  In local coordinates
\begin{align*}
({}^Q\Omega^-)^a{}_{bc}=0=(\Omega^-)^a{}_{\mu b}\rho^\mu_c+C^a{}_{bc},\quad \Longrightarrow\quad (\Omega^-)^a{}_{\mu b}=-(\rho^+)^c_\mu C^a{}_{bc}.
\end{align*}
Let us define $\Psi_a\in\Gamma(T^*M\otimes T^*M)$ as
\begin{align}\label{eq:PsiGaugeableCond}
(\Psi_a)_{\mu\nu}:=(\cL_{\rho_a}E)_{\mu\nu}-E_{\lambda\nu}\rho^\lambda_bC^b{}_{ac}(\rho^+)^c_\mu.
\end{align}
Taking ${}^Q\Omega^-_a=0$ modifies Equation \eqref{eq:GenSolutionOmega+} (the necessary condition for the existence of $\Omega^+_a$) so that consistency requires
\begin{align}\label{eq:NeccesaryConstistency}
\Psi_a=E\rho(E\rho)^+\Psi_a.
\end{align}

We summarise the preceding calculations in the form of a theorem for gauging with respect to a choice of linearly independent involutive vector fields:
\begin{theorem}\label{Thrm:GaugeEverything}
	Given an invertible field $E\in\Gamma(T^*M\otimes T^*M)$ and a choice of involutive linearly independent vector fields $\rho_a\in\Gamma(TM)$, $a=1,\dots,k$, defining a regular Lie algebroid $[\rho_a,\rho_b]=C^c{}_{ab}\rho_c$, the generalised Killing equation
	\begin{align*}
	(\cL_{\rho_a}E)_{\mu\nu}=&E_{\mu\lambda}\rho^\lambda_b(\Omega^+)^b{}_{\nu a}+E_{\lambda\nu}\rho^\lambda_b(\Omega^-)^b{}_{\mu a},
	\end{align*}
	has solutions for $\Omega^\pm_a\in\Gamma(T^*M\otimes Q)$ if and only if 
	\begin{align}\label{eq:GaugingConditions}
	\Psi_a=E\rho(E\rho)^+\Psi_a,\quad \det(\rho^TE\rho)\neq 0,
	\end{align}
	where 
	\begin{align}
	(\Psi_a)_{\mu\nu}:=(\cL_{\rho_a}E)_{\mu\nu}-E_{\lambda\nu}\rho^\lambda_bC^b{}_{ac}(\rho^+)^c_\mu,\quad (E\rho)^+=(\rho^TE^TE\rho)^{-1}\rho^TE^T.
	\end{align}
	If these conditions are satisfied then a solution is given by 
	\begin{align}\label{eq:Omegasolns}
	(\Omega^-)^a{}_{\mu b}=-(\rho^+)^c_\mu C^a{}_{bc},\quad\quad (\Omega^+)^a{}_{\mu b}=((E\rho)^+)^a{}_{\lambda}(\Psi_b)_{\lambda\mu},
	\end{align}
	where $(\Omega^\pm)^a{}_{\mu b}$ are connection coefficients defining $\nabla^\pm$.  The corresponding flat adjoint connections ${}^Q\nabla^\pm$ are defined by
	\begin{align}
	{}^Q\Omega^-_a=0,\quad\quad {}^Q\Omega^+_a=(\rho^T E\rho)^{-1}\rho^\lambda_a\partial_\lambda(\rho^T E\rho). 
	\end{align}
\end{theorem}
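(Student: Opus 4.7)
The plan is to combine the existence result for the generalised Killing equation (Theorem \ref{Th:GaugeThrm}) with the flatness constraint $R_{{}^Q\nabla^\pm}=0$ derived in Section \ref{Sec:GaugeAlgebroid}. The proof splits into showing the stated conditions \eqref{eq:GaugingConditions} are necessary, and then exhibiting the explicit solutions \eqref{eq:Omegasolns} that realise them.

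First I would specialise the general machinery to the linearly independent case. Because $\rho$ has full column rank, $(E\rho)^+=(\rho^T E^T E\rho)^{-1}\rho^T E^T$ is a genuine left inverse of $E\rho$, so the ambiguity term $(I-(E\rho)^+ E\rho)X_a$ in \eqref{eq:GenSolutionOmega+} vanishes and the general solution of \eqref{Ecompatibility} collapses to \eqref{eq:GenKillingEqnSoln}. Substituting this into $({}^Q\Omega^+_a)^b{}_c=(\Omega^+_a)^b{}_\mu \rho^\mu_c+C^b{}_{ac}$ from \eqref{Qcomponents} and applying the Lemma \eqref{eq:Lemma1} to rewrite $(\cL_{\rho_a}E)_{\lambda\mu}\rho^\mu_c$, one obtains the compact expression \eqref{eq:QOmega+form}, namely ${}^Q\Omega^+_a=(\rho^T E\rho)^+\rho_a(\rho^T E\rho)-(E\rho)^+E^T\rho\,{}^Q\Omega^-_a$.

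Next I would impose flatness. Closure of the gauge algebroid requires $R_{{}^Q\nabla^\pm}=0$, and by \eqref{FormofQflat} the coefficients must take Maurer--Cartan form ${}^Q\Omega^\pm_a=K_\pm^{-1}\rho^\mu_a\partial_\mu K_\pm$. Since ${}^Q\nabla^-$ is flat, I choose a frame $\tilde e_b=N^a{}_b e_a$ in which ${}^Q\widetilde\Omega^-_a=0$; in this frame \eqref{eq:QOmega+form} reduces to ${}^Q\widetilde\Omega^+_a=(\tilde\rho^T E\tilde\rho)^+\tilde\rho_a(\tilde\rho^T E\tilde\rho)$. Agreement with $\widetilde K_+^{-1}\tilde\rho_a\widetilde K_+$ forces $\tilde\rho^T E\tilde\rho$ to be invertible, hence $\det(\rho^T E\rho)\neq 0$ since the rank is invariant under the frame change by $N\in\End(Q)$. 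With $(\rho^T E\rho)^+=(\rho^T E\rho)^{-1}$ one may take $K_+=\rho^T E\rho$, and translating ${}^Q\Omega^-_a=0$ back through \eqref{Qcomponents} gives $(\Omega^-)^a{}_{\mu b}=-(\rho^+)^c_\mu C^a{}_{bc}$.

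The final step is to enforce consistency of \eqref{eq:GenKillingEqnSoln} with this choice: substituting $\Omega^-_a=-\rho^+ C_a$ into the solvability criterion $\cL_{\rho_a}E-E^T\rho\Omega^-_a=E\rho(E\rho)^+(\cL_{\rho_a}E-E^T\rho\Omega^-_a)$ yields exactly $\Psi_a=E\rho(E\rho)^+\Psi_a$ with $\Psi_a$ defined in \eqref{eq:PsiGaugeableCond}. Conversely, when the two conditions of \eqref{eq:GaugingConditions} hold, the formulas \eqref{eq:Omegasolns} solve the generalised Killing equation by construction, and the associated ${}^Q\nabla^\pm$ are flat: ${}^Q\Omega^-=0$ trivially, while ${}^Q\Omega^+_a=(\rho^T E\rho)^{-1}\rho^\mu_a\partial_\mu(\rho^T E\rho)$ is manifestly Maurer--Cartan. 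I expect the main obstacle to be bookkeeping of the frame freedom on $Q$, in particular ensuring that the gauge choice ${}^Q\Omega^-=0$ does not over-constrain the problem and that invertibility of $\rho^T E\rho$ is genuinely a frame-independent condition; this ultimately rests on the tensorial nature of $R_{{}^Q\nabla^\pm}$ already used in Section \ref{Sec:GaugeAlgebroid}.
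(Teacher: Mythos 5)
Your proposal follows essentially the same route as the paper: specialising the general solution of Theorem \ref{Th:GaugeThrm} to the full-column-rank case, lifting via \eqref{Qcomponents} and the Lemma to reach \eqref{eq:QOmega+form}, imposing flatness through the Maurer--Cartan form \eqref{FormofQflat} to extract $\det(\rho^T E\rho)\neq 0$ and the gauge ${}^Q\Omega^-_a=0$, and then reading off $\Psi_a=E\rho(E\rho)^+\Psi_a$ as the residual solvability condition. This is correct and matches the paper's derivation step for step.
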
	

Theorem \ref{Thrm:GaugeEverything} gives one solution for gauging (when the necessary condition \eqref{eq:GaugingConditions} is satisfied).  This choice is certainly not unique.  In fact, if $\rho$ is invertible then for any choice of flat connection ${}^Q\nabla^-$ it is possible to find a flat ${}^Q\nabla^+$ which satisfies the generalised Killing equation (Corollary \ref{Thrm:FlatConnectionChoice}).
~\\
\subsection{Gauging general vector fields }\label{Sec:GaugingGenVecFields}~\\
In this section we consider Lie algebroid gauging for a general set of involutive vector fields $\{\rho_a \}$---dropping the linear independence requirement.  A different perspective is taken from the last section.  If the generalised Killing equation holds, then a projected version of the equation holds   \eqref{Eq:ProjGenKilling}.  This projected form can be `lifted' to an equation on the vector bundle $Q$ described using Lie algebroid geometry. An expression defining the flat adjoint connections ${}^Q\nabla^\pm$ can be found in terms of objects on the vector bundle $Q$.   

A general set of involutive vector fields may become linearly dependent for some set of points in $M$. We consider a fixed number of vector fields $k\leq \dim(M)$, but the image of the distribution spanned by the vector fields is allowed to change dimension.  A set of involutive vector fields describes a generalised distribution and defines a singular foliation.  Not all singular foliations are generated by vector fields in this way and we restrict ourselves to those which do. As a concrete example of the type of vector fields we are interested in considering the following: 
\begin{example}[Generalised distribution]
	Let $M=\RR^2$ with coordinates $\{x,y \}$.  Consider the vector fields
	\begin{align*}
	X=\frac{\partial}{\partial x},\quad Y=\frac{\partial}{\partial x}+f(x)\frac{\partial}{\partial y},
	\end{align*}
	where $f(x)\in C^\infty(M)$ satisfies $f(x)=0$ for $x\leq 0$, and $f(x)>0$ for $x>0$. Let $\Delta$ be the distribution given by the span of $X$ and $Y$.  At each point $x\in M$, $\Delta_x\subseteq TM_x$ is the vector space given by the linear span of $X(x)$ and $Y(x)$.	For $x>0$ $\dim(\Delta)=2$, and for $x\leq 0$ $\dim(\Delta)=1$. 
	\begin{align*}
	[X,Y]=f'(x)\partial_y.
	\end{align*}
	At each point $x_0\in M$ we have that $[X,Y](x_0)$ is in the span of $\{X(x_0),Y(x_0)\}$ and the distribution is involutive. To be explicit, when $x>0$ $[X,Y](x_0)=\ln(f)'Y(x_0)-\ln(f)'X(x_0)$ and when $x\leq 0$ $[X,Y]=0$. By the Stefan--Sussman theorem the distribution $\Delta$ describes a singular foliation, with $\Delta$ giving the partition into leaves.  For a detailed discussion of this example see \cite{Ste73}.
\end{example}

Theorem \ref{Th:GaugeThrm} does not assume that $\{\rho_a \}$ are linearly independent and still applies as a necessary condition on solving the generalised Killing equation \eqref{Ecompatibility} in the general case.  The issue remaining is the possible construction of flat connections ${}^Q\nabla^\pm$ to ensure closure of the gauge algebroid.  If a set of vector fields $\{\rho_a\}$ satisfy the generalised Killing equation      
\begin{align*}
(\cL_{\rho_a}E)_{\mu\nu}=E_{\mu\lambda}\rho^\lambda_b(\Omega^+)^b{}_{\nu a}+E_{\lambda\nu}\rho^\lambda_b(\Omega^-)^b{}_{\mu a},
\end{align*}
it follows that 
\begin{align}\label{Eq:ProjGenKilling}
\rho^\mu_d\rho^\nu_c(\cL_{\rho_a}E)_{\mu\nu}=\rho^\mu_d\rho^\nu_cE_{\mu\lambda}\rho^\lambda_b(\Omega^+)^b{}_{\nu a}+\rho^\mu_d\rho^\nu_cE_{\lambda\nu}\rho^\lambda_b(\Omega^-)^b{}_{\mu a}.
\end{align}
The converse is of course not true in general; Equation \eqref{Eq:ProjGenKilling} does not imply that the generalised Killing equation is satisfied. It is instructive to `lift' Equation \eqref{Eq:ProjGenKilling} to the vector bundle $Q$.  By `lift' we mean rewriting $\Omega^\pm$ (which define the $TM$-connections on $Q$) in terms of ${}^Q\Omega^\pm$ (which define the $Q$-connections on $Q$) and replacing the field $E\in\Gamma(TM\otimes TM)$ with $\sfE:\Gamma(Q\otimes Q)$ defined by
\begin{align*}
\sfE(\cdot,\cdot ):=E(\rho(\cdot),\rho(\cdot)).
\end{align*} 

Note that Equation \eqref{Qcomponents} gives $\rho^\mu_c(\Omega^\pm)^a{}_{\mu b}=({}^Q\Omega^\pm)^a{}_{b c}-C^{a}{}_{bc}$; substituting this expression into Equation \eqref{Eq:ProjGenKilling} gives 
\begin{align}
\rho^\mu_d\rho^\nu_c(\cL_{\rho_a}E)_{\mu\nu}=&\rho^\mu_dE_{\mu\lambda}\rho^\lambda_b(({}^Q\Omega^+)^b{}_{ac}-C^b{}_{ac})+\rho^\lambda_bE_{\lambda\nu}\rho^\nu_c(({}^Q\Omega^-)^b{}_{ad}-C^b{}_{ad}),\nonumber\\
=&\sfE_{db}(({}^Q\Omega^+)^b{}_{ac}-C^b{}_{ac})+\sfE_{bc}(({}^Q\Omega^-)^b{}_{ad}-C^b{}_{ad}).\label{Eq:LiftingKilling}
\end{align}
The Lie algebroid on $TM$ defined by $[\rho_a,\rho_b]=C^c{}_{ab}\rho_c$ can be lifted to a Lie algebroid on $Q$ as follows: Choose a frame for $Q$, denoted $\{e_a \}$, and define a Lie algebroid by $[e_a,e_b]_Q:=C^c{}_{ab}e_c$, and $\rho(e_a):=\rho_a$.

Using a Lie algebroid structure $([\cdot,\cdot]_Q,\rho)$ we can define a Lie derivative on $Q$,
\begin{align*}
(\scL_q\sfE)(q_1,q_2):=\rho(q)(\sfE(q_1,q_2))-\sfE([q,q_1]_Q,q_2)-\sfE(q_1,[q,q_2]_Q),
\end{align*}
for any $q,q_1,q_2\in\Gamma(Q)$ and $\sfE\in\Gamma(Q^*\otimes Q^*)$. The expression in the local basis $\{e_a \}$ is
\begin{align*}
(\scL_q\sfE)(q_1,q_2)=q^a_1q^b_2(q^c\rho^\mu_c\partial_\mu\sfE_{ab}-C^d{}_{ca}\sfE_{db}q^c-C^d{}_{cb}\sfE_{ad}q^c+\sfE_{cb}\rho^\mu_a\partial_\mu q^c+\sfE_{ac}\rho^\mu_b\partial_\mu q^c),
\end{align*}
and it follows that
\begin{align}\label{Eq:CoordQLieDer}
(\scL_{e_a}\sfE)_{dc}=\rho^\mu_a\partial_\mu\sfE_{dc}-C^b{}_{ad}\sfE_{bc}-C^b{}_{ac}\sfE_{db}.
\end{align}
The Lie algebroid Lie derivative acting on $\sfE$ can be related to the standard Lie derivative on $E$: 
\begin{align*}
(\scL_{e_a}\sfE)_{dc}=&\rho^\lambda_a\partial_\lambda(E_{\mu\nu}\rho^\mu_d\rho^\nu_c)-C^b{}_{ad}\rho^\mu_bE_{\mu\nu}\rho^\nu_c-C^b{}_{ac}\rho^\mu_dE_{\mu\nu}\rho^\nu_b,\\
=&\rho^\lambda_a\partial_\lambda(E_{\mu\nu})\rho^\mu_d\rho^\nu_c+E_{\mu\nu}(\rho^\lambda_a\partial_\lambda\rho^\mu_d)\rho^\nu_c+E_{\mu\nu}\rho^\mu_d\rho^\lambda_a\partial_\lambda\rho^\nu_c\\
&-([\rho_a,\rho_d])^\mu E_{\mu\nu}\rho^\nu_c-([\rho_a,\rho_c])^\nu \rho^\mu_dE_{\mu\nu}\\
=&\rho^\lambda_a\partial_\lambda(E_{\mu\nu})\rho^\mu_d\rho^\nu_c+(\rho^\lambda_d\partial_\lambda\rho^\mu_a)E_{\mu\nu}\rho^\nu_c+(\rho^\lambda_c\partial_\lambda\rho^\nu_a)\rho^\mu_dE_{\mu\nu}\\
=&\rho^\mu_d\rho^\nu_c(\cL_{\rho_a}E)_{\mu\nu}.
\end{align*}
Combining \eqref{Eq:LiftingKilling} and \eqref{Eq:CoordQLieDer} gives 
\begin{align}\label{Eq:LiftedGenKilling}
\rho^\mu_a\partial_\mu \sfE_{dc}=\sfE_{db}({}^Q\Omega^+)^b{}_{ac}+\sfE_{bc}({}^Q\Omega^-)^b{}_{ad}.
\end{align}
We conclude that \eqref{Eq:LiftedGenKilling} holds if
$\{\rho_a\}$ satisfy the generalised Killing condition \eqref{Ecompatibility}. 

We would like to choose a frame in which $({}^Q\Omega^-)^b{}_{ad}=0$ (such a frame exists as ${}^Q\Omega^-$ is flat).  In this case \eqref{Eq:LiftedGenKilling} simplifies to
\begin{align}\label{Eq:LiftIdentity}
\rho^\mu_a\partial_\mu \sfE_{dc}=\sfE_{db}({}^Q\Omega^+)^b{}_{ac}.
\end{align} 
As ${}^Q\Omega^+$ is flat we can substitute \eqref{FormofQflat} into \eqref{Eq:LiftIdentity} and conclude that
\begin{align}\label{Eq:KEconstr}
\d{}_Q\sfE=\sfE K^{-1}\d{}_QK,\quad K\in C^\infty(M,\sfGL(k)).
\end{align}
Now we wish to construct a $K$ satisfying Equation \eqref{Eq:KEconstr}.  If $\rank(\sfE)=k$ we can simply choose $K=\sfE$. If $\rank(\sfE)=j<k$ we can still construct $K$.  There exists a frame where $\sfE$ is in the form 
\begin{align}\label{Eq:ReducedE}
\sfE=\left(\begin{matrix}
\sfE' & X'\\ 0 & 0
\end{matrix}\right),
\end{align}
where $\sfE'\in C^\infty(M,\RR^{j\times j})$ has rank $j$, and  $X'\in C^\infty(M,\RR^{j\times (k-j)})$.  Let us denote such a frame by $\{\tilde{e}_a \}$ where $\tilde{e}_a=N^b{}_ae_b$ for some $N\in C^\infty(M,\sfGL(k))$ and $\{e_a \}$ is the original frame defined by $\rho(e_a)=\rho_a$. We can now take  
\begin{align}\label{Eq:FormofK}
K=\left(\begin{matrix}
\sfE' & X'\\ 0 & I
\end{matrix}\right)\quad \Longrightarrow\quad {}^Q\Omega^+=\left(\begin{matrix}
\sfE'^{-1}\d{}_Q\sfE' & \sfE'^{-1}\d{}_QX'\\ 0 & 0
\end{matrix}\right).
\end{align}
A straightforward calculation shows that this choice satisfies \eqref{Eq:KEconstr}. After this choice is made it is possible to change back to the original frame.

The construction given above will produce a valid choice ${}^Q\Omega^+$ with ${}^Q\Omega=0$ (when Theorem \ref{Th:GaugeThrm} holds) in the frame where $\sfE$ is of the form \eqref{Eq:ReducedE}. Transforming back to the original frame will give $({}^Q\Omega)^a{}_{bc}=N^a{}_x\rho^\mu_b\partial_\mu(N^{-1})^x{}_c$. So we need to find a set of vector fields $\{\rho_a \}$ which satisfy Theorem \ref{Th:GaugeThrm} when $\Omega^-$ satisfies $\rho^\mu_c(\Omega^-)^a{}_{\mu b}=C^a{}_{cb}+{}^Q\Omega^a{}_{bc}$.  We are now ready to state the gauging theorem for arbitrary involutive vector fields.

\begin{theorem}\label{Thrm:GaugeArbitrary}
	Given an invertible field $E\in\Gamma(T^*M\otimes T^*M)$ and arbitrary involutive vector fields $\rho_a\in\Gamma(TM)$, $a=1,\dots,k$, defining a Lie algebroid $[\rho_a,\rho_b]=C^c{}_{ab}\rho_c$, the generalised Killing equation
	\begin{align*}
	(\cL_{\rho_a}E)_{\mu\nu}=&E_{\mu\lambda}\rho^\lambda_b(\Omega^+)^b{}_{\nu a}+E_{\lambda\nu}\rho^\lambda_b(\Omega^-)^b{}_{\mu a},
	\end{align*}
	has solutions defining flat adjoint connections ${}^Q\nabla^\pm$ if  
	\begin{align}\label{eq:GaugingConditions2}
	\Psi'_a=E\rho(E\rho)^+\Psi'_a,
	\end{align}
	where 
	\begin{align}
	(\Psi'_a)_{\mu\nu}:=(\cL_{\rho_a}E)_{\mu\nu}-E_{\lambda\nu}\rho^\lambda_b(\Omega^-)^b_{\mu a},
	\end{align}
	for some $(\Omega^-)^b_{\mu a}$ which satisfies $\rho^\mu_c(\Omega^-)^b_{\mu a}=C^b{}_{ca}+({}^Q\Omega)^b{}_{ac}$ (where ${}^Q\Omega^b{}_{ac}$ is given by \eqref{eq:Omegasolns2}).
	Let $N\in C^\infty(M,\sfGL(k))$ be such that the frame $\tilde{e}_a=N^b{}_ae_a$ brings $\sfE$ to the form \eqref{Eq:ReducedE}. If these conditions are satisfied then there exist ${}^Q\nabla^\pm$ defined by 
	\begin{align}\label{eq:Omegasolns2}
	({}^Q\Omega^-)^a{}_{bc}=N^a{}_x\rho^\mu_b\partial_\mu(N^{-1})^x{}_c,\quad\quad ({}^Q\Omega^+)^a{}_{bc}=((KN^{-1})^{-1})^a{}_x\partial^\mu_b(KN^{-1})^x{}_c,
	\end{align}
	where $K\in C^\infty(M,\sfGL(k))$ is constructed following the discussion preceding this theorem (Equation \eqref{Eq:FormofK}).
\end{theorem}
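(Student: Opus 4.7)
The plan is to assemble Theorem~\ref{Thrm:GaugeArbitrary} by combining Theorem~\ref{Th:GaugeThrm} with the lifting argument developed just before the theorem statement. The idea is to first build the flat $Q$-connections $^Q\nabla^{\pm}$ directly on $Q$, in a convenient frame; then translate back to obtain $\Omega^{\pm}$ on $TM$; and finally invoke Theorem~\ref{Th:GaugeThrm} to guarantee that this $\Omega^{\pm}$ actually solves the full generalised Killing equation, not merely its projection.

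First I would pass to a frame $\tilde e_a = N^b{}_a e_b$ in which $\sfE$ takes the block form \eqref{Eq:ReducedE}; such a frame exists locally on any open subset where $\mathrm{rank}(\sfE)$ is constant, since $N$ simply reorders and rescales the generators so that the first $j$ columns of $\sfE$ span its image. In this frame set $^Q\widetilde{\Omega}^{-}=0$ and define $K$ by \eqref{Eq:FormofK}; a direct block computation (the one indicated in the paragraph preceding the theorem) verifies that $K\in C^\infty(M,\sfGL(k))$ and that $^Q\widetilde{\Omega}^{+}=K^{-1}\d_Q K$ satisfies the lifted identity \eqref{Eq:LiftIdentity}, i.e.\ it solves the projection \eqref{Eq:LiftedGenKilling} of the generalised Killing equation onto $Q\otimes Q$. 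Both connections are automatically flat because they are of the Maurer--Cartan form \eqref{FormofQflat}.

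Next I would transform back to the original frame via $N^{-1}$. The $Q$-connection coefficients transform in the usual way, and one checks that $^Q\Omega^{-}$ and $^Q\Omega^{+}$ become the expressions \eqref{eq:Omegasolns2}; flatness is preserved because both expressions are again of the Maurer--Cartan form with gauge elements $N$ and $KN^{-1}$, respectively. Using \eqref{Qcomponents} I would then read off the associated $TM$-connection coefficients $\Omega^{\pm}$; in particular the resulting $\Omega^{-}$ satisfies the matching condition $\rho^\mu_c(\Omega^{-})^b{}_{\mu a}=C^b{}_{ca}+(^Q\Omega)^b{}_{ac}$ demanded in the hypothesis.

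The last step is to upgrade the projected identity \eqref{Eq:LiftedGenKilling} to the full generalised Killing equation \eqref{Ecompatibility}. This is exactly where hypothesis \eqref{eq:GaugingConditions2} enters: with $\Omega^{-}$ fixed as above, $\Psi'_a$ is precisely the obstruction identified in Theorem~\ref{Th:GaugeThrm}, so \eqref{eq:GaugingConditions2} is the necessary and sufficient condition that an $\Omega^{+}$ solving the full equation exists; moreover uniqueness in the range of $(E\rho)(E\rho)^{+}$ forces this $\Omega^{+}$ to be compatible with the flat $^Q\Omega^{+}$ already constructed on $Q$, so both pieces glue into a single solution. The main obstacle is precisely this gluing in the rank-deficient case: the frame change $N$ and the block form \eqref{Eq:ReducedE} are only available on an open subset of $M$ where $\mathrm{rank}(\sfE)$ is locally constant, and one must verify that the naive extension by an identity block in \eqref{Eq:FormofK} keeps $K\in \sfGL(k)$ smoothly and that the induced $\Omega^{-}$ is consistent with the projection condition everywhere on that open set. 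This is the source of the theorem's local character and the reason one cannot simply invert $\sfE$ as in the regular-foliation case of Theorem~\ref{Thrm:GaugeEverything}.
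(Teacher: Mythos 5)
Your proposal follows essentially the same route as the paper: the paper's ``proof'' of Theorem~\ref{Thrm:GaugeArbitrary} is exactly the discussion preceding it --- project the generalised Killing equation onto $Q\otimes Q$, lift it via \eqref{Qcomponents} to the Maurer--Cartan form \eqref{Eq:KEconstr}, construct $K$ and $N$ as in \eqref{Eq:FormofK}, and then feed the resulting $\Omega^-$ into the Moore--Penrose solvability criterion of Theorem~\ref{Th:GaugeThrm}. The gluing step you flag (compatibility of the $\Omega^+$ produced by Theorem~\ref{Th:GaugeThrm} with the preconstructed flat ${}^Q\Omega^+$) is treated no more carefully in the paper itself, so your argument matches the paper's in both structure and level of detail.
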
	

The lifting perspective of Lie algebroid gauging is useful even when the vector fields are linearly independent.  In fact, when $\rho$ is invertible (establishing an isomorphism between $Q$ and $TM$) we find that the generalised Killing equation can be solved for any choice of flat ${}^Q\nabla^-$ while simultaneously satisfying the gauge algebroid.
\begin{lemma}\label{Lemma:Curvature}
	Given a connection ${}^Q\nabla^-$ defined in a local frame by $({}^Q\Omega^-)^a{}_{bc}$, and ${}^Q\nabla^+$ defined by
	\begin{align}\label{Eq:InvertibleSolution}
	({}^Q\Omega^+)^b{}_{ac}=(\sfE^{-1})^{bd}\rho^\mu_a\partial_\mu \sfE_{dc}-\sfE_{dc}({}^Q\Omega^-)^d{}_{ae}(\sfE^{-1})^{be},
	\end{align} 
	the following identity holds:
	\begin{align}
	(R_{{}^Q\nabla^+})^d{}_{abc}=-\sfE_{ec}(R_{{}^Q\nabla^-})^e{}_{abf}(\sfE^{-1})^{df}.
	\end{align}
\end{lemma}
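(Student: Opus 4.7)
The plan is to recognise the defining formula for ${}^Q\Omega^+$ as a covariant-constancy condition for $\sfE$ with respect to an appropriately assembled $Q$-connection on $Q^*\otimes Q^*$, and then extract the stated curvature identity by applying the induced curvature operator to $\sfE$.

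First, multiplying Equation \eqref{Eq:InvertibleSolution} through by $\sfE$ yields
\[
\rho^\mu_a\,\partial_\mu \sfE_{dc} \;=\; \sfE_{db}\,({}^Q\Omega^+)^b{}_{ac} \,+\, \sfE_{bc}\,({}^Q\Omega^-)^b{}_{ad},
\]
which is precisely the lifted generalised Killing equation \eqref{Eq:LiftedGenKilling}. Dualised, this states that $\sfE\in\Gamma(Q^*\otimes Q^*)$ is covariantly constant with respect to the mixed $Q$-connection $\widehat{\nabla}$ on $Q^*\otimes Q^*$ obtained by taking the dual of ${}^Q\nabla^-$ on the first $Q^*$-factor and the dual of ${}^Q\nabla^+$ on the second $Q^*$-factor. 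The index pattern in the displayed equation fixes unambiguously which connection acts on which factor.

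Next, apply the curvature operator $\widehat{R}$ of $\widehat{\nabla}$ to $\sfE$. Since $\widehat{\nabla}\sfE = 0$ we have $\widehat{R}\sfE = 0$ identically. Using the standard rule that curvature on a dual bundle satisfies $(R_{\nabla^*}(e_a,e_b)\alpha)(v) = -\alpha(R_\nabla(e_a,e_b)v)$ and that curvatures add across tensor-product factors, the components read
\[
\bigl(\widehat{R}(e_a,e_b)\sfE\bigr)_{dc} \;=\; -\sfE_{ec}\,(R_{{}^Q\nabla^-})^e{}_{abd} \;-\; \sfE_{df}\,(R_{{}^Q\nabla^+})^f{}_{abc}.
\]
Setting this to zero, contracting with $(\sfE^{-1})^{df}$, and relabelling dummy indices produces exactly the claimed identity.

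The only real subtlety is the bookkeeping just mentioned: the asymmetric pairing (${}^Q\nabla^-$ on one factor, ${}^Q\nabla^+$ on the other) is what makes $\sfE$ covariantly constant, and inverting that assignment would ruin the calculation. One could alternatively verify the lemma by brute substitution of the formula for $({}^Q\Omega^+)^b{}_{ac}$ into the component definition of $(R_{{}^Q\nabla^+})^d{}_{abc}$, but this forces one to cancel by hand the cross-terms involving $\rho^\mu_a\rho^\nu_b\partial_\mu\partial_\nu\sfE$, whose antisymmetric part produces the structure functions through $[\rho_a,\rho_b]=C^e{}_{ab}\rho_e$. The covariant-constancy viewpoint bypasses this bookkeeping entirely and makes the overall minus sign in the stated identity transparent as the dual-bundle sign convention.
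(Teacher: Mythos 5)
Your proof is correct, and it takes a genuinely different route from the paper's. The paper proves the lemma in the Appendix by brute force: it splits $({}^Q\Omega^+)^b{}_{ac}$ into the two pieces $(\Omega_1)^b{}_{ac}=(\sfE^{-1})^{bd}\rho^\mu_a\partial_\mu\sfE_{dc}$ and $(\Omega_2)^b{}_{ac}=-\sfE_{dc}({}^Q\Omega^-)^d{}_{ae}(\sfE^{-1})^{be}$, expands $R_{{}^Q\nabla^+}$ into the pure-$\Omega_1$, pure-$\Omega_2$ and cross blocks, shows the $\Omega_1$ block vanishes, and verifies that the cross terms cancel the leftover pieces of the $\Omega_2$ block --- exactly the hand-cancellation you flag as avoidable. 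Your route instead observes that \eqref{Eq:InvertibleSolution} is equivalent, after multiplying by $\sfE$, to the lifted Killing equation \eqref{Eq:LiftedGenKilling}, i.e.\ to $\widehat\nabla\sfE=0$ for the mixed tensor-product connection (dual of ${}^Q\nabla^-$ on the factor carrying the first index of $\sfE$, dual of ${}^Q\nabla^+$ on the second, as the index pattern forces), and then extracts the identity from $\widehat R(\cdot,\cdot)\sfE=0$. I checked the bookkeeping against the paper's conventions ${}^Q\nabla_{e_a}e_c=({}^Q\Omega)^b{}_{ac}e_b$ and $(R)^d{}_{abc}$ with $a,b$ the two-form slots and $c$ the section index: your displayed formula $(\widehat R(e_a,e_b)\sfE)_{dc}=-\sfE_{ec}(R_{{}^Q\nabla^-})^e{}_{abd}-\sfE_{df}(R_{{}^Q\nabla^+})^f{}_{abc}$ is right, and contracting with $(\sfE^{-1})^{df}$ reproduces the lemma exactly, including the sign. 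What your approach buys is conceptual transparency (the minus sign is the dual-bundle sign convention, and the symmetry between $R_{{}^Q\nabla^+}$ and $R_{{}^Q\nabla^-}$ is manifest, which is really the content of Corollary \ref{Thrm:FlatConnectionChoice}); what the paper's computation buys is that it never has to assert the dual/tensor-product curvature rules for Lie algebroid connections. If your argument were to replace the paper's, it would be worth one sentence noting that since $Q$-curvature is defined by the same commutator formula \eqref{LieAlgCurvature} as in the classical case, the standard facts you invoke (curvature annihilates parallel sections; curvatures dualize with a sign and add across tensor factors) carry over verbatim.
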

\begin{proof}
	The proof of this is simply a computation and we give details in the Appendix.
\end{proof}
\begin{corollary}\label{Thrm:FlatConnectionChoice}
Given an invertible $\rho$, the generalised Killing equation \eqref{Ecompatibility} has solutions. 
Furthermore, if ${}^Q\Omega$ defines a flat connection ${}^Q\nabla^-$ it follows that ${}^Q\Omega$ defines a flat connection ${}^Q\nabla^+$.
\end{corollary}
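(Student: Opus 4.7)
The plan is to read the conclusion directly off Theorem \ref{Th:GaugeThrm} and Lemma \ref{Lemma:Curvature}, with invertibility of $\rho$ doing almost all the work. First I would dispatch existence. Since $E$ has been assumed invertible throughout this section and $\rho$ is invertible by hypothesis, the product $E\rho$ is invertible, so the Moore--Penrose pseudo-inverse $(E\rho)^+$ coincides with the genuine inverse and $E\rho(E\rho)^+=I$. The obstruction
\begin{align*}
\cL_{\rho_a}E-E^T\rho\Omega^-_a=E\rho(E\rho)^+(\cL_{\rho_a}E-E^T\rho\Omega^-_a)
\end{align*}
appearing in Theorem \ref{Th:GaugeThrm} is therefore satisfied automatically for every choice of $\Omega^-_a$, and a solution $\Omega^+_a$ of the generalised Killing equation \eqref{Ecompatibility} exists.

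For the second assertion, start with an arbitrary flat ${}^Q\nabla^-$ given in a local frame by coefficients $({}^Q\Omega^-)^a{}_{bc}$, and define ${}^Q\Omega^+$ by the explicit formula \eqref{Eq:InvertibleSolution} from Lemma \ref{Lemma:Curvature}. I would then check two things in turn. First, that this pair genuinely satisfies the generalised Killing equation: rearranging \eqref{Eq:InvertibleSolution} yields precisely the lifted form \eqref{Eq:LiftedGenKilling}, and because $\rho$ is invertible, the field $\sfE=\rho^T E\rho$ is invertible and the contraction with $\rho^\mu_d\rho^\nu_c$ used to pass from \eqref{Ecompatibility} to \eqref{Eq:LiftedGenKilling} is reversible; the $TM$-connection coefficients $(\Omega^\pm)^a{}_{\mu b}$ are recovered from $({}^Q\Omega^\pm)^a{}_{bc}$ via $(\Omega^\pm)^a{}_{\mu b}=(\rho^{-1})^c{}_{\mu}\bigl[({}^Q\Omega^\pm)^a{}_{bc}-C^a{}_{bc}\bigr]$ using \eqref{Qcomponents}. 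Second, that ${}^Q\nabla^+$ is flat: Lemma \ref{Lemma:Curvature} gives the identity
\begin{align*}
(R_{{}^Q\nabla^+})^d{}_{abc}=-\sfE_{ec}(R_{{}^Q\nabla^-})^e{}_{abf}(\sfE^{-1})^{df},
\end{align*}
and since $R_{{}^Q\nabla^-}=0$ by assumption, $R_{{}^Q\nabla^+}=0$ follows immediately.

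The only delicate point is step (i): the lifted equation \eqref{Eq:LiftedGenKilling} is obtained from the generalised Killing equation by projecting with $\rho_d\otimes\rho_c$, which in general is only a one-way implication (and is precisely why the discussion in Section \ref{Sec:GaugingGenVecFields} had to treat the degenerate case with some care). The invertibility hypothesis on $\rho$ is essential here, because it is exactly what makes the projection reversible and so permits us to manufacture a bona fide solution of \eqref{Ecompatibility} out of data defined purely on the algebroid $Q$. Once this equivalence is in hand, the corollary reduces to a direct application of Lemma \ref{Lemma:Curvature}, and no further computation is required.
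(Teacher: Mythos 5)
Your proof is correct and follows essentially the same route as the paper's: the flatness transfer is exactly the application of Lemma \ref{Lemma:Curvature} to the explicit solution \eqref{Eq:InvertibleSolution}, and your verification that \eqref{Eq:InvertibleSolution} solves \eqref{Ecompatibility} (via reversibility of the $\rho_d\otimes\rho_c$ contraction when $\rho$ is invertible) usefully fills in the step the paper dismisses as ``easily verified''. The only cosmetic difference is that you establish existence through Theorem \ref{Th:GaugeThrm} and the identity $E\rho(E\rho)^+=I$, whereas the paper simply takes ${}^Q\Omega^-=0$ in \eqref{Eq:InvertibleSolution}; both are immediate.
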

\begin{proof}
	The statement that \eqref{Eq:InvertibleSolution} satisfies the generalised Killing equation is easily verified.  It is always possible to choose $({}^Q\Omega^-)^a{}_{bc}=0$, so a solution always exists. The final statement follows from Lemma \ref{Lemma:Curvature}.	
\end{proof}
\subsection{Lie algebroid gauging examples}\label{Sec:Examples}~\\
We conclude the derivation of the general solution to the Lie algebroid gauging conditions (Equations \eqref{Ecompatibility} and \eqref{Eq:FlatnessConstraint}) with examples demonstrating the results.  Theorems \ref{Thrm:GaugeEverything} and \ref{Thrm:GaugeArbitrary} give a methodical procedure for determining whether a set of vector fields can be used to gauge an action---as well as constructing a choice of connection coefficients when gauging is possible.  The first example provides an explicit use of the results developed in this section. The second example shows that the gauging procedure (when it is defined) is not unique. The third example gives an alternative choice of gauging.
\begin{example}\label{ex:Gauging}
	Let $M=\RR^3$ with coordinates $\{x,y,z\}$.  Take 
	\begin{align*}
	G=(\d x)^2+(\d y)^2+(1+x^2)(\d z)^2,\quad C=2x\d x\wedge \d z,
	\end{align*}
	giving
	\begin{align*}
	E=\left(\begin{matrix}
	1 & 0 & x\\
	0 & 1 & 0\\
	-x & 0 & 1+x^2
	\end{matrix}\right),\quad H=\d C=0.
	\end{align*}
	The field $E\in\Gamma(T^*M\otimes T^*M)$ is invertible as $\det(E)=1+2x^2\neq 0$.
	
	Consider the possibility of gauging with respect to the vector fields $\{\rho_a \}=\{\partial_x,\partial_y \}$.  
	\begin{align*}
	\cL_{\partial_x}E=\left(\begin{matrix}
	0 & 0 & 1\\
	0 & 0 & 0\\
	-1 & 0 & 2x
	\end{matrix}\right),\quad \cL_{\partial_y}E=0,\quad \rho=\left(\begin{matrix}
	1 & 0 \\
	0 & 1 \\
	0 & 0 
	\end{matrix}\right),\quad \rho^+=\left(\begin{matrix}
	1 & 0 & 0\\
	0 & 1 & 0
	\end{matrix}\right).
	\end{align*}
	We note that $\cL_{\partial_x}E\neq0$ so $\partial_x$ is not a Killing vector field.  The vector fields $\{\partial_x,\partial_y \}$ are linearly independent---so we apply Theorem \ref{Thrm:GaugeEverything}. In this case the necessary consistency condition \eqref{eq:NeccesaryConstistency} is not satisfied:
	\begin{align*}
	(E\rho)(E\rho)^+=\frac{1}{1+x+x^2}\left(\begin{matrix}
	x & 0 & 1-2x^2\\
	0 & 0 & 0\\
	-x & 0 & 2x^3-1
	\end{matrix}\right)\quad \Longrightarrow \quad \cL_{\partial_x}E \neq E\rho(E\rho)^+\cL_{\partial_x}E.
	\end{align*}
	We conclude that it is not possible to gauge $E$ non-isometrically using the vector fields $\{\partial_x,\partial_y \}$.  
	
	Now consider the possibility of gauging with respect to $\{\rho'_a \}=\{\partial_x,\partial_z \}$.
	\begin{align*}
	\cL_{\partial_x}E=\left(\begin{matrix}
	0 & 0 & 1\\
	0 & 0 & 0\\
	-1 & 0 & 2x
	\end{matrix}\right),\quad \cL_{\partial_z}E=0,\quad \rho'=\left(\begin{matrix}
	1 & 0 \\
	0 & 0 \\
	0 & 1 
	\end{matrix}\right),\quad \rho'^+=\left(\begin{matrix}
	1 & 0 & 0\\
	0 & 0 & 1
	\end{matrix}\right).
	\end{align*}
	In this case the necessary consistency conditions are satisfied, with the only non-trivial part being
	\begin{align*}
	\cL_{\partial_x}E=\left(\begin{matrix}
	0 & 0 & 1\\
	0 & 0 & 0\\
	-1 & 0 & 2x
	\end{matrix}\right)= \left(\begin{matrix}
	1 & 0 & 0\\
	0 & 0 & 0\\
	0 & 0 & 1
	\end{matrix}\right)\left(\begin{matrix}
	0 & 0 & 1\\
	0 & 0 & 0\\
	-1 & 0 & 2x
	\end{matrix}\right)=E\rho'(E\rho')^+\cL_{\partial_x}E.
	\end{align*} 
	We conclude that we can gauge with respect to $\{\partial_x,\partial_z \}$.  Explicitly, using \eqref{eq:Omegasolns}, we find
	\begin{align*}
	\Omega^-=0,\quad \Omega^+=\frac{1}{1+2x^2}\left(\begin{matrix}
	x\d x+(1-x^2)\d z & 0\\ 
	-\d x+3x\d z & 0
	\end{matrix}\right)\quad \Longrightarrow\quad {}^Q\Omega^-=0,\quad {}^Q\Omega^+=\sfE^{-1}\partial_x\sfE \d x,
	\end{align*}
	where
	\begin{align*}
	\sfE=\rho^TE\rho=\left(\begin{matrix}
	1 & x\\ -x & 1+x^2
	\end{matrix}\right),\quad \det(\sfE)=1+2x^2\neq 0.
	\end{align*}
\end{example}

Example \ref{ex:Gauging} provides us with an explicit example of gauging where $R_{\nabla^+}\neq 0$ but the gauge algebroid closes ($R_{{}^Q\nabla^\pm}=0$).  A direct calculation gives
\begin{align*}
R_{\nabla^+}=\frac{1}{2(1+2x^2)^2}\left(\begin{matrix}
-2x(5+x^2) & 0\\
5-8x^2 & 0
\end{matrix}\right)\d x\wedge \d z.
\end{align*}
In this case $\nabla^+$ does not define a representation of a Lie algebroid on $Q\cong T\cF$ (where the leaves of $\cF$ are the $xz$-planes).  The local gauging data is given by $\{\rho_a,\Omega^\pm \}$. However, from a geometric perspective the gauging symmetry should not be viewed as arising from $\nabla^\pm$.  The vector fields $\{\rho_a \}$ generate the action of the gauging symmetry on $TM$.  The connections ${}^Q\Omega^\pm$ describe the lifted action on sections of $Q$.  The gauging is associated to the flow of the Lie algebroid actions defined by ${}^Q\nabla^\pm$.  The flat connections ${}^Q\nabla^\pm$ define representations of two Lie algebroids $(Q,{}^Q\nabla^\pm)$.  The infinitesimal action is generated by $(Q,{}^Q\nabla^\pm)$.  The finite groupoid action comes from the Weinstein Lie groupoids $\cG({}^Q\nabla^\pm)$.  

\begin{example}
	The choice of gauging in Example \ref{ex:Gauging} is not unique.  Example \ref{ex:Gauging} can also be gauged with respect to the vector fields $\{\rho''_a \}=\{\partial_x,\partial_y,\partial_z \}$.  In this case one can take 
	\begin{align*}
	{}^Q\Omega^+=E^{-1}\partial_x E\d x-E({}^Q\Omega^-)E^{-1},
	\end{align*}
	where ${}^Q\Omega^-$ is any choice of $Q$-flat connection. 
\end{example}

Poisson--Lie T-duality, introduced by Klim\v{c}\'{i}k and \v{S}evera \cite{Kli95,Kli95a}, describes an equivalence\footnote{The equivalence here is a symplectomorphism between the phase spaces of both models.} between two non-linear sigma models. 

The non-linear sigma models described by Poisson--Lie T-duality are of the form
\begin{align}
S[X]=\int dzd\bar{z}(G_{\mu\nu}+B_{\mu\nu})\partial X^\mu\bar{\partial} X^\nu=:\int dz d\bar{z}E_{\mu\nu}\partial X^\mu \bar{\partial} X^\nu.
\end{align}
Suppose that there is a right-action of a Lie group $\sfG$ on the closed target manifold $M$.  Choosing a basis of left-invariant vector fields $v_a\in\Gamma(TM)$, $a=1,\dots,\dim(\sfG)$, the Lie algebra $\fg=\Lie(\sfG)$ is realised by
\begin{align*}
[v_a,v_b]=C^c{}_{ab}v_c,
\end{align*} 
where $C^c{}_{ab}\in\RR$ are structure constants. The vector fields $\rho_a$ are not isometries of $E$. Instead the following Poisson--Lie condition is required to hold:
\begin{align}\label{PLcompatibility}
(\cL_{\rho_a}E)_{\mu\nu}=\widetilde{C}^{kl}{}_{a}\rho^\lambda_k\rho^\tau_lE_{\lambda\nu}E_{\mu\tau},
\end{align}
for some constants $\widetilde{C}^{ab}{}_c$.  The identity $[\cL_{\rho_a},\cL_{\rho_b}]E=\cL_{[\rho_a,\rho_b]}E$ imposes the constraint 
\begin{align}\label{Eq:LieBialgebra}
2C^d{}_{[a|g|}\widetilde{C}^{gl}{}_{b]}+2\widetilde{C}^{dg}{}_{[b}C^f{}_{a]g}-C^g{}_{ab}\widetilde{C}^{dl}{}_{g}=0.
\end{align}
This constraint is the relation for the structure constants of the Lie bialgebra $(\sfG,\widetilde{\sfG})$.\footnote{For a review of Drinfeld doubles and Lie bialgebras see for example \cite{Ale94,Kos07}.}  

We claim that a non-linear sigma model satisfying the Poisson--Lie gauging conditions can be gauged for a choice ${}^Q\Omega^\pm$ that differs from that of Theorem \ref{Thrm:GaugeEverything}. This highlights the lack of uniqueness of the Lie groupoid gauging procedure.
\begin{example}
Consider a Poisson--Lie sigma model i.e. some $E\in\Gamma(T^*M\otimes T^*M)$ satisfying \eqref{PLcompatibility} where $C^a{}_{bc}$ and $\widetilde{C}^{ab}{}_c$ satisfy \eqref{Eq:LieBialgebra}. Choose the frame $\{e_a \}$ where $\rho_a:=\rho(e_a)$ coincides with the vector fields for Poisson--Lie gauging. Take
\begin{align*}
({}^Q\Omega^+)^a{}_{bc}=C^a{}_{bc},\quad\quad ({}^Q\Omega^-)^a{}_{bc}=\widetilde{C}^{ad}{}_b\sfE_{cd}+C^a{}_{bc},
\end{align*}
where $\sfE=E(\rho(\cdot),\rho(\cdot))$. The Lie algebroid gauging condition \eqref{Ecompatibility} is equivalent to Equation \eqref{PLcompatibility}. Calculating the $Q$-curvatures we find:
\begin{subequations}
	\begin{align}
	(R_{{}^Q\nabla^+})_{abc}=&\rho^\mu_a\partial_\mu C^d{}_{bc}e_d;\\
	(R_{{}^Q\nabla^-})_{abc}=&\Big(\rho^\mu_a\partial_\mu C^d{}_{bc}+2\rho^\mu_{[a}\partial_{|\mu|}(\widetilde{C}^{dl}{}_{b]}\sfE_{cl})+2(C^d{}_{[a|g|}\widetilde{C}^{gl}{}_{b]}-C^g{}_{ab}\widetilde{C}^{dl}{}_{g})\sfE_{cl}\label{PLR+}\\
	&+2\widetilde{C}^{dl}{}_{[a}C^g{}_{b]c}\sfE_{gl}+2\widetilde{C}^{dk}{}_{[a|}\sfE_{gk}\widetilde{C}^{gl}{}_{|b]}\sfE_{cl}\Big)e_d.\nonumber
	\end{align}
\end{subequations}
It is clear that $R_{{}^Q\nabla^+}=0$, as $C^a{}_{bc}\in\RR$.  To see that $R_{{}^Q\nabla^-}$ vanishes requires a little bit more work.  Using the definition of the Lie derivative, and \eqref{PLcompatibility}, we have
\begin{align*}
\rho^\lambda_a\partial_\lambda E_{\mu\nu}=\widetilde{C}^{kl}_a\rho^\lambda_k\rho^\tau_lE_{\lambda\nu}E_{\mu\tau}-(\partial_\mu\rho^\lambda_a)E_{\lambda\nu}-(\partial_\nu\rho^\lambda_a)E_{\mu\lambda}.
\end{align*}
Multiplying by $\rho^\mu_b\rho^\nu_c$ (which are invertible):
\begin{align*}
\rho^\mu_b\rho^\nu_c\rho^\lambda_a\partial_\lambda E_{\mu\nu}=&\widetilde{C}^{kl}_a\sfE_{kc}\sfE_{bl}-(\rho^\mu_b\partial_\mu\rho^\lambda_a)E_{\lambda\nu}\rho^\nu_c-(\rho^\nu_c\partial_\nu\rho^\lambda_a)E_{\mu\lambda}\rho^\mu_b\\
=&\widetilde{C}^{kl}_a\sfE_{kc}\sfE_{bl}+C^d{}_{ab}\rho^\lambda_{d}E_{\lambda\nu}\rho^\nu_c+(\rho^\mu_a\partial_\mu\rho^\lambda_b)E_{\lambda\nu}\rho^\nu_c\\
&+C^d{}_{ac}\rho^\lambda_dE_{\mu\lambda}\rho^\mu_b+(\rho^\nu_a\partial_\nu\rho^\lambda_c)E_{\mu\lambda}\rho^\mu_b,
\end{align*}
so that $\rho^\mu_a\partial_\mu \sfE_{bc}=\widetilde{C}^{kl}_a\sfE_{kc}E_{bl}+C^d{}_{ab}\sfE_{dc}+C^d{}_{ac}\sfE_{bd}$.  Substituting this into \eqref{PLR+}, gives
\begin{align*}
(R^{{}^Q\nabla^-})_{abc}=(2C^d{}_{[a|g|}\widetilde{C}^{gl}{}_{b]}+2\widetilde{C}^{dg}{}_{[b}C^f{}_{a]g}-C^g{}_{ab}\widetilde{C}^{dl}{}_{g})\sfE_{cl}e_d.
\end{align*}
We conclude that $R_{{}^Q\nabla-}=0$ as $C^a{}_{bc}$ and $\widetilde{C}^{ab}{}_{c}$ define a Lie bialgebra.
\end{example}


\section{Recovering the ungauged action}\label{Sec:RecoverUngauged}
This section discusses the difficulty in understanding when the ungauged model can be recovered from the gauged model.  

If we wish to claim that the gauged action is equivalent to the ungauged action there must exist a finite gauge transformation which sets the gauge field $A$ to zero.  If $A$ is in the gauge orbit of zero then we can recover the original action by choosing the $A=0$ gauge. If $A$ is not gauge equivalent to zero then the gauged action is not equivalent to the original action.  In the case of the Wess--Zumino--Witten (WZW) model the finite action can be explicitly written down.  This is briefly reviewed in Section \ref{Sec:WZWmodel}. A Gauge field $A$ in the gauge orbit of zero must satisfy the Maurer--Cartan equation. This condition can be imposed via a Lagrange multiplier term.  The addition of this gauge invariant term is essential for some applications (such as T-duality).  

In the more general case of Lie algebroid gauging we should require that the gauge field $A$ is gauge equivalent to zero through some Lie groupoid action. The Maurer--Cartan equation for Lie groupoids is discussed in Section \ref{Sec:MaurerCartan}. In Section \ref{Sec:SpecialCaseF} we see that in the special case that $\Omega^+=\Omega^-$ there is a corresponding Maurer--Cartan equation when $R_{\nabla^{\pm}}=0$. However, in this case the Lie groupoid Maurer--Cartan equation is isomorphic to the usual Maurer--Cartan equation and the applicable Lie algebroids are (locally) isomorphic to Lie algebras. Finally Section \ref{Sec:GenMaurerCartan} discusses the fact that we do not expect a general field strength term to be able to be written in terms of our gauge fields $A$.

\subsection{WZW model}\label{Sec:WZWmodel}~\\
The standard WZW model describes the embedding of a string worldsheet $\Sigma$ into a Lie group $\sfG$ via $g:\Sigma\rightarrow \sfG$.  The non-linear sigma model is given by
\begin{align*}
S_{\text{WZW}}[g]=\frac{1}{2}\int_{\Sigma}\d {}^2z (g^{-1}\partial g)^\mu E_{\mu\nu} (g^{-1}\bar{\partial}g)^\nu,
\end{align*}
where $E_{\mu\nu}\in\RR$.  The action is invariant under the left action $hg$ for $h\in\sfG$.  It is possible to promote this to a local action $h\in C^\infty(\Sigma,\sfG)$ by introducing a gauge field $A\in\Omega^1(\Sigma,\fg)$.  The gauged action is
\begin{align*}
S_{\text{WZW}}[g,A]=\frac{1}{2}\int_{\Sigma}\d {}^2z (g^{-1}D g)^\mu E_{\mu\nu} (g^{-1}\bar{D}g)^\nu,
\end{align*}
where $g^{-1}Dg=g^{-1}\d g-g^{-1}Ag$. The gauge transformations are given by
\begin{align*}
{}^h(g,A)=(hg,hAh^{-1}+\d h h^{-1}).
\end{align*}
The original action $S_{\text{WZW}}[g]$ can be recovered from the gauged action $S_{\text{WZW}}[g,A]$ if we can set $A=0$:
\begin{align*}
0={}^hA=hAh^{-1}+\d h h^{-1}\quad \Longrightarrow \quad A=-h^{-1}\d h,
\end{align*}
for some $h\in C^\infty(\Sigma,\sfG)$.\footnote{If $A$ is in the gauge orbit of zero, i.e. $A=-h^{-1}\d h$ for some $h\in C^\infty(\Sigma,\sfG)$, the gauged model is equivalent to the original via the field redefinition $g\rightarrow hg$.} It is well known that $A=-h^{-1}dh$ is the most general solution to the Maurer--Cartan equation $F=0$ where $F\in\Omega^2(\Sigma,\sfG)$
\begin{align}
F=\d A-[A \wedgec A]_\fg,\quad \quad \left(F^a=\d A^a+\frac{1}{2}C^a{}_{bc}A^b\wedge A^c \right).
\end{align}
It is common to refer to $F$ as the \emph{field strength}.  We conclude that the ungauged action is equivalent to the gauged model if and only if we impose the constraint $F=0$.  It is possible to consider this as an extra constraint. However, it is often useful to include it as an equation of motion.  To do this we add a Lagrange multiplier term to the action
\begin{align*}
S_F[A,\widetilde{X}]=\int_\Sigma \langle\widetilde{X},F\rangle:=\int_\Sigma \widetilde{X}_aF^a,
\end{align*}
where $\widetilde{X}\in C^\infty(\Sigma,\fg^*)$.  The constraint $F=0$ comes from the equations of motion for $\widetilde{X}$.   

If we wish to add the term $S_F[A,\widetilde{X}]$ to the action $S_{\text{WZW}}[g,A]$ then it should preserve gauge invariance. Under a gauge transformation of $h\in C^\infty(\Sigma,\sfG)$ the fields strength transforms as
\begin{align*}
{}^hF=\Ad_h F:=hFh^{-1}.
\end{align*}
The term in the Lagrangian will be invariant under the left action of $h\in C^\infty(\Sigma,\sfG)$ if and only if
\begin{align*}
{}^h\widetilde{X}=\Ad^*_{h^{-1}}\widetilde{X},
\end{align*}
where $\Ad^*_{h^{-1}}$ is the \emph{coadjoint action} $\langle \Ad^*_{h}\widetilde{X},X\rangle:=\langle\widetilde{X},\Ad_hX\rangle$ for $\widetilde{X}\in \fg^*$ and $X\in \fg$. The natural question now is how much of this generalises to the case of Lie algebroid gauging?  In the special case where $\Omega^+=\Omega^-$ we will see in Section \ref{Sec:SpecialCaseF} that a field strength term exists. However, gauge invariance requires that the Lie algebroid is in fact isomorphic to a Lie algebra.

\begin{remark}
	The introduction of the curvature term is an essential part of the non-abelian T-duality procedure (for a description of non-abelian T-duality see \cite{Roc91,del92}).  The Lagrange multipliers $\widetilde{X}$ used to impose $F=0$ in the original model are interpreted as local coordinates on some dual manifold $\widetilde{M}$ on which a dual non-linear sigma model is defined.
\end{remark}  

Even if one chooses to impose the condition that $A$ is in the gauge orbit of zero by hand (avoiding the field strength term) it is still important to have an understanding of the general form of $A$ i.e. $A=-h^{-1}\d h$ for some $h\in C^\infty(\Sigma,\sfG)$.


\subsection{Maurer--Cartan algebroid}\label{Sec:MaurerCartan}~\\
There is a convenient description of certain Lie algebroid morphisms in terms of a Maurer--Cartan form.  This interpretation is due to Fernandes and Struchiner \cite{Fer07}.


Consider a Lie algebroid morphism $(\Phi,X)$ where $\Phi:T\Sigma\rightarrow Q$, and $X:\Sigma\rightarrow M$.  The key observation is to consider $\Phi:\Gamma(T\Sigma)\rightarrow \Gamma(Q)$ as $A\in\Gamma(T^*\Sigma\otimes Q)$:
\begin{align*}
A(s):=\Phi^*(s),
\end{align*}
for $s\in \Gamma(T\Sigma)$.  The maps $(\Phi,X)$ define a Lie algebroid morphism if and only if $A$ can be interpreted as a Maurer--Cartan form. 

The setup can be described with the following commutative diagrams:
\begin{equation*}
\begin{tikzpicture}[node distance=2cm, auto]
\node (TS) {$T\Sigma$};
\node [right of=TS] (Q') {$Q$};
\node [below of=TS] (S) {$\Sigma$};
\node [right of=S] (S2) {$M$};
\node [right of=Q', node distance=4cm] (TS2) {$T\Sigma$};
\node [right of=TS2] (Q'2) {$Q$};
\node [below of=Q'2] (TS3) {$TM$};

\node [below of=Q', node distance=1cm] (A){};
\node [right of=A] (B){such that};

\draw[->] (TS) -- node[above,anchor=east]{} (S); 
\draw[->] (TS) -- node[above, anchor=south]{$A$} (Q');
\draw[->] (S) --node[above, anchor=south]{$X$} (S2);
\draw[->] (Q') -- (S2);
\draw[->] (TS2) -- node[above,anchor=south]{$A$} (Q'2);
\draw[->] (TS2) -- node[above,anchor=east]{$X_*$} (TS3);
\draw[->] (Q'2) -- node[above,anchor=west]{$\rho$} (TS3);
\end{tikzpicture},
\end{equation*}
where the second diagram expresses the anchor compatibility condition.  Given a choice of $T\Sigma$-connection on $Q$, denoted $\nabla$, define
\begin{align}
(\d{}_{\nabla} A)(s_1,s_2):=&\nabla_{s_1} A(s_2)-\nabla_{s_2}A(s_1)-A([s_1,s_2]_{T\Sigma}),\\
[A\wedgec A']_{\nabla}(s_1,s_2)=&\tfrac{1}{2}\left(T_{\nabla}(A(s_1),A'(s_2))+T_{\nabla}(A(s_2),A'(s_1))\right),
\end{align}
where $s_1,s_2\in\Gamma(T\Sigma)$ and $A,A'\in\Gamma(T^*\Sigma\otimes Q)$.
The expression
\begin{align}\label{LieAlgmorphism}
F_A(s_1,s_2)=&(\d{}_{\nabla}A-[A\wedgec A]_{\nabla})(s_1,s_2),
\end{align}
defines an element $F_A\in\Gamma(\wedge^2T^*\Sigma\otimes Q)$ which is independent of the choice of $\nabla$.

An element $A\in\Gamma(T^*\Sigma\otimes Q)\cong\Omega^1(\Sigma,Q)$ satisfies $F_A\equiv 0$ if and only if it defines a Lie algebroid morphism between $T\Sigma$ and $Q$.  In this case $A$ can be interpreted as a Maurer--Cartan form as follows: Let $\cG$ be a Lie groupoid with Lie algebroid $Q$.  Left translation by an element $h\in\cG$ is a diffeomorphism between $\s{}$-fibres $L_h:\s{}^{-1}(\s{}(h))\rightarrow \s{}^{-1}(\t{}(h))$.  A \emph{left-invariant one-form} on $\cG$ is an $\s{}$-foliated one-form $A$ on $\cG$ such that for all $h\in\cG$
\begin{align*}
A(X)=A(\d{}_gL_h(X)),\quad \forall g\in \s{}^{-1}(\s{}(h)),\ X\in T^{\s{}}_g\cG.
\end{align*}  
This is also denoted $(L_h)^*A=A$.  A \emph{Maurer-Cartan form on a Lie groupoid $\cG$} is the $Q$-valued $\s{}$-foliated left-invariant one-form defined by
\begin{align*}
A(X)=(\d L_{h^{-1}})_h(X),
\end{align*}  
for $X\in T^{\s{}}_h\cG$.  The Maurer--Cartan form $A:T^{\s{}}\cG\rightarrow Q$ covers the target map $\t{}:\cG\rightarrow M$.

\begin{theorem}[\cite{Fer07}]\label{GroupoidMorphismThrm}
	Let $\cG$ be a Lie groupoid with Lie algebroid $Q$ and let $A$ be its left invariant Maurer--Cartan form.  If $A':T\Sigma\rightarrow Q$ is a solution of the Maurer--Cartan equation covering a map $X:\Sigma\rightarrow M$, then for each $\sigma\in \Sigma$ and $h\in \cG$ such that $X(\sigma)=\s{}(h)$, there exists a unique locally defined diffeomorphism $X':\Sigma\rightarrow {\s{}}^{-1}(X(\sigma))$ satisfying:
	\begin{align*}
	X'(\sigma)=h,\quad {X'}^*A=A'.
	\end{align*}	
\end{theorem}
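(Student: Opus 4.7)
The plan is to construct $X'$ as the local leaf of an integrable distribution on $\Sigma\times\cG$, using Frobenius' theorem, with involutivity being a direct consequence of the two Maurer--Cartan equations in play (one for $A$ on $\cG$, the other the hypothesis $F_{A'}=0$ for $A'$).

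First I would set up the ambient space. Let $N\subset \Sigma\times\cG$ be the fibered product
\begin{equation*}
N=\{(\tau,g)\in\Sigma\times\cG:\ \t(g)=X(\tau)\},
\end{equation*}
which is a smooth submanifold because $\t$ is a submersion and $A'$ covers $X$ (so the anchor compatibility $\rho\circ A'=\d X$ guarantees that $X(\Sigma)$ lies in the $\t$-orbit through $X(\sigma)$). Over $N$, define a rank-$\dim\Sigma$ distribution $\cE$ by declaring $\cE_{(\tau,g)}$ to consist of those pairs $(v,\xi)\in T_\tau\Sigma\oplus T_g\cG$ such that $\xi\in T^{\s{}}_g\cG$, $\d\t(\xi)=\d X(v)$, and $A(\xi)=A'(v)$. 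Because $A|_{T^{\s{}}\cG}$ is fiberwise invertible (left-translation), the vector $\xi$ is uniquely determined by $v$, so $\cE$ is a smooth distribution of the claimed rank and it projects isomorphically onto $T\Sigma$ under $\d p_1$. Any integral submanifold of $\cE$ passing through a point $(\tau_0,g_0)$ will therefore be locally the graph of a smooth map $X':U\subset\Sigma\to\cG$ with $\s\circ X'$ constant, $\t\circ X'=X$, and ${X'}^*A=A'$ by construction.

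The main step is to verify that $\cE$ is involutive. Given two sections of $\cE$ of the form $Y_i=(v_i,\xi_i)$, $i=1,2$, where $\xi_i$ is the unique lift of $v_i$ to $T^{\s{}}\cG$ satisfying $A(\xi_i)=A'(v_i)$, I would compute the $\cG$-component of $[Y_1,Y_2]$ and show that $A([\xi_1,\xi_2]_{\cG})=A'([v_1,v_2]_\Sigma)$, which together with the obvious projection properties proves $[Y_1,Y_2]\in\cE$. Expanding with the standard Cartan formula,
\begin{equation*}
A([\xi_1,\xi_2])=\xi_1(A(\xi_2))-\xi_2(A(\xi_1))-\d A(\xi_1,\xi_2),
\end{equation*}
and using the Maurer--Cartan identity $\d A=[A\wedgec A]$ on $\cG$ (equivalently, flatness of left translation), one rewrites the right-hand side purely in terms of quantities along $X$. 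The corresponding identity for $A'$ on $\Sigma$ is the hypothesis $F_{A'}=\d_\nabla A'-[A'\wedgec A']=0$. These two identities match under the correspondence $\xi_i\leftrightarrow v_i$ via $A(\xi_i)=A'(v_i)$, yielding the required closure; this is where the double Maurer--Cartan structure is essential and is the main obstacle to get cleanly, because one must carefully handle the bracket $[\,\cdot\,,\,\cdot\,]_\cG$ of left-invariant extensions and reconcile it with the bracket $[\,\cdot\,,\,\cdot\,]_Q$ defining $F_{A'}$.

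With involutivity established, Frobenius' theorem produces a unique maximal integral leaf $L\subset N$ through the point $(\sigma,h)$ (which indeed lies in $N$ since $\t(h)=\s(\u(X(\sigma)))\cdot\ldots$---more directly, by hypothesis $X(\sigma)=\s(h)$, and the setup requires $\t(g)=X(\tau)$; one adjusts the base point using the freedom in choosing $h$ compatibly, or works with the analogous fibered product using $\s$). Shrinking $\Sigma$ to a small neighborhood $U$ of $\sigma$ if necessary, $p_1|_L:L\to U$ is a local diffeomorphism, so $L$ is the graph of a uniquely determined smooth map $X':U\to\cG$. By construction $X'(\sigma)=h$, the image of $X'$ lies in a single $\s$-fiber (namely $\s^{-1}(X(\sigma))$), and the graph condition translates to ${X'}^*A=A'$. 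Uniqueness is immediate: any other map $X''$ satisfying the same conditions has graph contained in $\cE$ and passing through $(\sigma,h)$, hence contained in $L$; by dimension it coincides with $L$ locally, so $X''=X'$.
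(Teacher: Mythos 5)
The paper does not prove this statement: it is imported verbatim from \cite{Fer07}, so there is no internal proof to compare against. Your Frobenius argument on a graph distribution in $\Sigma\times\cG$ is nonetheless the standard (and essentially the original) route: build the rank-$\dim\Sigma$ distribution of pairs $(v,\xi)$ with $\xi$ the unique $\s{}$-vertical lift of $v$ determined by $A(\xi)=A'(v)$, show that the two Maurer--Cartan equations force involutivity, and read off $X'$ as the local leaf through the base point; your uniqueness argument via uniqueness of integral leaves is also correct.

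Two things keep this from being a complete proof. First, the involutivity computation --- which is the entire content of the theorem --- is asserted rather than carried out; you yourself flag it as ``the main obstacle to get cleanly''. The formula $A([\xi_1,\xi_2])=\xi_1(A(\xi_2))-\xi_2(A(\xi_1))-\mathrm{d}A(\xi_1,\xi_2)$ does not make invariant sense for a $Q$-valued form without a choice of connection (or the left-invariant trivialisation of $T^{\s{}}\cG$), and one must actually verify that the resulting structure equation for $A$ on $\cG$ matches, term by term and in the same normalisation, the hypothesis $F_{A'}=\mathrm{d}_{\nabla}A'-[A'\wedgec A']_{\nabla}=0$; until that bracket computation is done the argument is a plan, not a proof. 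Second, the source/target bookkeeping at the base point is left genuinely unresolved: your fibred product imposes $\t{}(g)=X(\tau)$, while the theorem's hypothesis reads $X(\sigma)=\s{}(h)$, and you explicitly defer the reconciliation. To be fair, the tension partly originates in the paper's own conventions --- with composition defined for $\s{}(h_2)=\t{}(h_1)$, left translation $L_h$ preserves $\s{}$-fibres rather than mapping between them, and the form $A(\xi)=(\mathrm{d}L_{h^{-1}})_h(\xi)$ covers $\s{}$ rather than $\t{}$ --- but a proof must fix one consistent convention (e.g.\ the right-invariant Maurer--Cartan form covering $\t{}$, with hypothesis $X(\sigma)=\t{}(h)$ and image in $\s{}^{-1}(\s{}(h))$) and carry the involutivity computation through in it. Doing those two things would complete the argument.
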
 

\begin{example}
	Choose a manifold $M$ and a Lie algebra $\fg$. A Lie algebroid morphism $TM\rightarrow \fg$ is the same thing as a  one-form $A\in\Omega^1(M,\fg)$ satisfying the Maurer--Cartan equation $\d A-[A \wedgec A]_\fg=0$.
\end{example}


\subsection{Special case where $\Omega^+=\Omega^-$}\label{Sec:SpecialCaseF}~\\
An appropriate Maurer--Cartan equation imposing the condition that the gauge field $A$ is gauge equivalent to zero (and hence the gauged action is equivalent to the ungauged action) has appeared in the literature for the special case that $\Omega^+=\Omega^-$ (or equivalently $\phi=0$).  The local coordinate formula 
\begin{align}
F^a_{\nabla^\omega}=\d A^a+\omega^a{}_{\mu b}A^b\wedge DX^\mu+\tfrac{1}{2}C^a{}_{bc}A^b\wedge A^c,
\end{align}
was given in \cite{May08}.  This can be identified as a Lie algebroid Maurer--Cartan form:\footnote{Here we ignore issues regarding pullbacks (discussed in Section \ref{ProperPullback}) to compare with the expression given in the literature.}
\begin{align*}
F_{\nabla^\omega}=\d{}_{\nabla^\omega}A-[A\wedgec A]_{\nabla^\omega}.
\end{align*}  
This can be verified by an explicit calculation in local coordinates:
\begin{align*}
(\d{}_{\nabla^\omega}A)(v_1,v_2)=&\nabla^\omega_{v_1}(A(v_2))-\nabla^\omega_{v_2}(A(v_1))-A([v_1,v_2]_{TM})\\
=&v^\mu_1 v^\nu_22(\partial_{[\mu}A^a_{\nu]}+\omega^a{}_{[\mu|b}A^b_{|\nu]})e_a,\\
[A\wedge A]_{\nabla^\omega}(v_1,v_2)=&\tfrac{1}{2}(T_{\nabla^\omega}(A(v_1),A(v_2))+T_{\nabla^\omega}(A(v_2),A(v_1)))\\
=&v^\mu_1v^\nu_2A^b_{[\mu} A^c_{\nu]}(2\rho^\lambda_{[b|}\omega^a{}_{\mu|c]}-C^a{}_{bc})e_a.
\end{align*}
Taking $v^\mu_1=\d X^\mu$ and $v^\nu_2=\d X^\nu$, we have
\begin{align*}
F_{\nabla^\omega}=&(\d A^a+\omega^a{}_{\mu b}A^b\wedge dX^\mu-\omega^a_{\mu b}A^b\wedge \rho^\mu_c A^c+\tfrac{1}{2}C^a{}_{bc}A^b\wedge A^c)e_a\\
=&(\d A^a+\omega^a{}_{\mu b}A^b\wedge DX^\mu+\tfrac{1}{2}C^a{}_{bc}A^b\wedge A^c)e_a.
\end{align*}

The term $F_{\nabla^\omega}=0$ will define a Lie algebroid Maurer--Cartan equation if it transforms covariantly under gauge transformations i.e. $\delta_{\varepsilon}F_{\nabla^\omega}\propto F_{\nabla^\omega}$ (the infinitesimal Lie algebroid analogue of ${}^hF=\Ad_h F$).  The infinitesimal variation can be calculated directly:
\begin{align*}
\delta_{\varepsilon}(F_{\nabla^\omega})^a=(C^a{}_{bc}-\rho^\mu_b\omega^a{}_{c\mu})\varepsilon^c(F_{\nabla^\omega})^b+(R_{\nabla^\omega})^a{}_{b\mu\nu}\varepsilon^bDX^\mu\wedge DX^\nu+D^a{}_{bc\mu}\varepsilon^cDX^\mu\wedge A^b,
\end{align*}
where $D^a{}_{bc\mu}=(\nabla^\omega_{\mu}T_{\nabla^\omega})^a{}_{bc}-2\rho^\nu_{[b}(R_{\nabla^\omega})^a{}_{c]\nu\mu}$. It follows immediately that we require $R_{\nabla^\omega}=D^a{}_{bc\mu}=0$. This requires $\omega$ to be a flat $TM$-connection on $Q$.

If $R_{\nabla^\omega}=0$ it follows from the results of \cite{Bou17} that $\cG(Q^\omega)$ is isomorphic to a Lie group $\sfG$.  In the frame where $\omega^a{}_{\mu b}=0$ the $Q$-paths coincide with the flowlines along the right-invariant sections $\fX_{\text{inv}}(\sfG)$.

The condition $F_{\nabla^\omega}=0$ is equivalent to the statement that $A:T\Sigma\rightarrow Q$ is a Lie algebroid morphism. In particular, we can interpret $A$ as a Maurer--Cartan form for the Lie groupoid $\cG(Q^\omega)$ (where $Q^{\omega}$ denotes the Lie algebroid defined by the representation $\nabla^\omega$).  The groupoid is specified by flowing along paths defined on the Lie algebroid $Q^\omega$.

\subsection{General case $R_{{}^Q\nabla^\pm}=0$}\label{Sec:GenMaurerCartan}~\\
In the case that $\Omega^+=\Omega^-$ ($\phi=0$) the field strength term $F_{\nabla^\omega}$ has the interpretation as the Maurer--Cartan equation for the gauge field $A$ and can be interpreted as a Lie algebroid morphism. This suggests that a field strength term for Lie groupoid gauging would be based on gauge fields $\cA^\pm\in\Gamma((X^{**}Q)^*\otimes Q)$ describing a Lie algebroid morphism between Lie algebroids on the vector bundles $X^{**}Q$ and $Q$.  The flat connections ${}^Q\nabla^\pm$ define Lie algebroids, however the gauging procedure does not define any fields which could be interpreted as the required Maurer--Cartan forms $\cA^\pm$.  This suggests that it is not possible to find a Maurer--Cartan equation for the fields $A$ which will be equivalent to $A$ being in the gauge orbit of zero. It is important to understand which $A$ result in a gauge equivalent action and which do not.  Without the knowledge of the finite groupoid action on $A$ the usefulness of the Lie algebroid gauging procedure is potentially limited.

\section{Comments on groupoid structure}\label{Sec:Groupoid}

\begin{definition}\label{Def:LocalGauging}
	A non-linear sigma model $(X,\Sigma,h,E,H,S_Q[X])$  can be \emph{locally gauged} when there exists some $U\subset M$ such that $\rho_a\in\Gamma(TU)$ and ${}^Q\Omega^\pm\in\Gamma(Q^*_U\otimes \End(Q_U))$ satisfy the requirements of Theorem \ref{Thrm:GaugeEverything}.  The local non-linear sigma model is given by the restriction of $(X,\Sigma,h,E,H,S_Q[X])$ to $U\subset M$. The adjoint connections ${}^Q\nabla^\pm$ give representations of local algebroid actions. If $U=M$, and the Lie algebroid actions can be integrated to Lie groupoid actions $\cG({}^Q\nabla^\pm)$, we say that the non-linear sigma model can be \emph{gauged}.
\end{definition}


\begin{corollary}\label{Cor:LocalGauging}
	All non-linear sigma models  $(X,\Sigma,h,E,H,S_Q[X])$ can be locally gauged when $E\in \Gamma(T^*M\otimes T^*M)$ is invertible.
\end{corollary}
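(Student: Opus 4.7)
The plan is to reduce the problem to a direct application of Theorem \ref{Thrm:GaugeEverything} by making the most permissive choice of gauging vector fields, namely one spanning $TU$ for a suitable coordinate neighbourhood $U \subset M$. The underlying observation is that the necessary and sufficient gauging conditions in that theorem, $\Psi_a = E\rho(E\rho)^+\Psi_a$ and $\det(\rho^T E\rho)\neq 0$, both simplify dramatically once $\rho$ is a square invertible matrix, which is automatic when we take as many linearly independent $\rho_a$ as $\dim(M)$.

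Concretely, I would first pick a coordinate chart $(U,\{x^\mu\})$ around an arbitrary point of $M$, and set $\rho_a := \partial_a$ for $a=1,\dots,n=\dim(M)$. These are linearly independent on $U$ and commute, so $[\rho_a,\rho_b] = 0$, giving a regular Lie algebroid on $Q = TU$ with vanishing structure functions $C^c{}_{ab} = 0$. In this frame the matrix $\rho = I_n$, so $E\rho = E$ is invertible by hypothesis, which yields $(E\rho)^+ = E^{-1}$ and $E\rho(E\rho)^+ = I_n$. Consequently both consistency conditions $\det(\rho^T E\rho) = \det(E) \neq 0$ and $\Psi_a = E\rho(E\rho)^+\Psi_a$ are trivially satisfied.

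With the hypotheses of Theorem \ref{Thrm:GaugeEverything} in force, the explicit solution \eqref{eq:Omegasolns} produces connection coefficients $(\Omega^\pm)^a{}_{\mu b}$ on $U$, and the corresponding adjoint connections ${}^Q\nabla^\pm$ are flat by construction, with
\begin{align*}
{}^Q\Omega^- = 0, \qquad {}^Q\Omega^+ = E^{-1}\,\partial_a E\, \d x^a,
\end{align*}
since $\sfE = \rho^T E\rho = E$ in this frame. This produces the data required by Definition \ref{Def:LocalGauging} on $U$, completing the verification that the non-linear sigma model can be locally gauged.

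There is essentially no obstacle to overcome here; the substance is already contained in Theorem \ref{Thrm:GaugeEverything}, and the corollary is a matter of exhibiting the universally available choice $\rho_a = \partial_a$ on a coordinate chart. The only mild subtlety worth flagging is that this local gauging is by no means canonical — it depends on the choice of chart and on taking $k = \dim(M)$ — and in particular it does not address whether a prescribed smaller or globally defined set of vector fields can gauge the model, which is precisely the content of the more restrictive discussion preceding the corollary.
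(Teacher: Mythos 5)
Your proof is correct and follows essentially the same route as the paper: choose a coordinate chart, take the $n$ coordinate vector fields so that $\rho=I_n$ is square and invertible, observe that both conditions of Theorem \ref{Thrm:GaugeEverything} become trivial since $(E\rho)^+=E^{-1}$, and read off the connections from \eqref{eq:Omegasolns}. The only cosmetic difference is that you specialise immediately to $\rho_a=\partial_a$ (as in Example \ref{Ex:LocalGauging}) and write out ${}^Q\Omega^\pm$ explicitly, whereas the paper phrases the argument for an arbitrary linearly independent local frame.
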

\begin{proof}
	Choose a local trivialisation of $M$, denoted $U\subset \RR^n$, and a set of linearly independent $\{\rho_a \}\in\Gamma(TM)$, $a=1,\dots,n=\dim(M)$ (such a set of vector fields exists see for Example \ref{Ex:LocalGauging}). These vector fields define a local frame for $Q_U\cong TU$ and define a Lie algebroid $[\rho_a,\rho_b]=C^c{}_{ab}\rho_c$.  The associated matrix $\rho$ is square and invertible, giving $\rho^+=\rho^{-1}$.  If $E$ is invertible then $E\rho$ and $\rho^TE\rho$ have $\rank(n)$ and $(E\rho)^+=(E\rho)^{-1}$.  The compatibility conditions \eqref{eq:GaugingConditions} are satisfied.  The construction of Theorem \ref{Thrm:GaugeEverything} gives a choice of $\nabla^\pm$ (or equivalently ${}^{Q_U}\nabla^\pm$).
\end{proof}

\begin{example}[Local gauging]\label{Ex:LocalGauging}
	Take a manifold $M$, and some $U\subset M$ with local coordinates $\{x_1,x_2,\dots,x_n \}$.  It is always possible to take $Q_U\cong TU$ and $\{\rho_a\}=\{\partial_{x_1},\partial_{x_2},\dots,\partial_{x_n} \}$ giving $\rho=I_{n\times n}$.  
\end{example}

\begin{corollary}[Global gauging]\label{GlobalGauging}
	If there exists a global frame (the manifold $M$ is parallelisable) the non-linear sigma model $(X,\Sigma,h,E,H,S_Q[X])$ can be gauged.
\end{corollary}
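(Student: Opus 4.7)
The plan is to reduce the statement to Theorem \ref{Thrm:GaugeEverything} applied globally and then to verify that the resulting flat adjoint connections integrate to honest Lie groupoid actions. First, I would use parallelisability to fix a global frame $\{\rho_a\}_{a=1}^n$ of $TM$, take $Q = TM$ with anchor the identity, and define the Lie algebroid bracket by $[\rho_a,\rho_b] = C^c{}_{ab}\rho_c$ with $C^c{}_{ab}\in C^\infty(M)$ globally defined. In this frame the matrix $\rho$ is the $n\times n$ identity and hence globally invertible, so $E\rho$ and $\rho^TE\rho$ are globally invertible whenever $E$ is. The compatibility constraint \eqref{eq:GaugingConditions} is then trivially satisfied on all of $M$, and Theorem \ref{Thrm:GaugeEverything} produces globally defined connection coefficients $(\Omega^\pm)^a{}_{\mu b}$ together with flat adjoint connections ${}^Q\nabla^\pm$ specified by ${}^Q\Omega^- = 0$ and ${}^Q\Omega^+ = (\rho^TE\rho)^{-1}\rho^\lambda_a\partial_\lambda(\rho^TE\rho)$. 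In particular $U = M$ in Definition \ref{Def:LocalGauging}, which handles the local-gauging portion of the claim.

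The more delicate step is promoting the flat adjoint connections ${}^Q\nabla^\pm$ to genuine Lie groupoid actions $\cG({}^Q\nabla^\pm)$ on all of $M$. My approach is to exploit the fact that $Q = TM$, as a Lie algebroid with the tangent bracket of the chosen frame, is canonically integrable: the pair groupoid $M\times M$, or equivalently the fundamental groupoid $\Pi_1(M)$, provides a Lie groupoid integrating $Q$. The representations of $Q$ on itself encoded by each ${}^Q\nabla^\pm$ then integrate to representations of this Lie groupoid through parallel transport, and the explicit description \eqref{FormofQflat} with $K_-\equiv I$ and $K_+ = \rho^TE\rho \in C^\infty(M,\sfGL(n))$ globally defined lets one write the holonomy in closed form.

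The main obstacle I anticipate is precisely this integrability step --- going from the local Weinstein groupoid, which exists for any Lie algebroid, to a global Lie groupoid compatible with the monodromy obstructions recalled in Section \ref{Sec:Groupoid}. On a parallelisable manifold this obstruction is neutralised by two features: the ambient algebroid $TM$ is always integrable, and the flat connections ${}^Q\nabla^\pm$ admit globally defined trivialisations $K_\pm$ that make parallel transport computable in closed form. The remaining verification is that the transformation groupoid generated by the flat frame, acting through the holonomy of ${}^Q\nabla^\pm$, coincides with $\cG({}^Q\nabla^\pm)$ in the sense of Definition \ref{Def:LocalGauging}; once this matching is established, the corollary follows by combining it with the global gauging data produced by Theorem \ref{Thrm:GaugeEverything}.
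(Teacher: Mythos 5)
Your proposal is correct, and its first half is essentially the paper's own proof: the paper simply takes a global frame of linearly independent vector fields spanning $TM$ (which exists by parallelisability) and observes that the construction of Corollary \ref{Cor:LocalGauging} --- i.e.\ Theorem \ref{Thrm:GaugeEverything} with $\rho$ square and invertible, so that the compatibility conditions \eqref{eq:GaugingConditions} hold automatically --- extends to all of $M$. Where you go beyond the paper is the second half: Definition \ref{Def:LocalGauging} requires not only $U=M$ but also that the Lie algebroid actions integrate to Lie groupoid actions $\cG({}^Q\nabla^\pm)$, and the paper's three-line proof does not address this at all, whereas you do. Your argument for that step is sound and worth spelling out: since $Q\cong TM$ is integrable (by $\Pi_1(M)$, or the pair groupoid), and since the flat connections admit \emph{global} trivialisations $K_-=I$ and $K_+=\rho^TE\rho\in C^\infty(M,\sfGL(n))$ via \eqref{FormofQflat}, parallel transport of a covariantly constant section is $s(y)=K_\pm(y)^{-1}K_\pm(x)s(x)$, which is path-independent; so the monodromy is trivial and the representations descend even to the pair groupoid. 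In short, your route is the same reduction the paper uses, plus a justification of the integrability clause that the paper leaves implicit --- arguably making yours the more complete proof of the statement as the paper has defined ``can be gauged.''
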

\begin{proof}
	Take a set of globally defined non-zero linearly independent vector fields which span $TM$.  These exist by the definition of a parallelisable manifold.  The construction of Corollary \ref{Cor:LocalGauging} can be extended globally using this choice of vector fields.
\end{proof}

Examples of parallelisable manifolds include Lie groups, $\sfS^1$, $\sfS^3$, $\sfS^7$, and all oriented three-manifolds.

We note that global gaugings may exist for manifolds which are not parallelisable. 

We see that the notion of local gauging allows us to get a local understanding of gauging with respect to regular smooth distributions. A regular smooth distribution has locally constant rank by definition.  Fix some $U\subset M$ such that $\dim(\text{Im}(\rho))=k$ is constant. We can construct a local gauging using the procedure described above (by definition there is a local set of linearly independent vector fields spanning the distribution).  The local gaugings may be of different dimensions for different choices of $U\subset M$. 

Consider rescaling a set of vector fields $\{\rho_a\}$ by $f\in C^\infty(M)$ to give $\{f\rho_a\}$.  Suppose that $\{\rho_a \}$ satisfies the generalised Killing condition
\begin{align*}
(\cL_{\rho_a}E)_{\mu\nu}=E_{\mu\lambda}\rho^\lambda_b(\Omega^+)^b{}_{\nu a}+E_{\lambda\nu}\rho^\lambda_b(\Omega^-)^b{}_{\mu a}.
\end{align*}
It follows that 
\begin{align*}
(\cL_{f\rho_a}E)_{\mu\nu}=E_{\mu\lambda}f\rho^\lambda_b(\Omega'^+)^b{}_{\nu a}+E_{\lambda\nu}f\rho^\lambda_b(\Omega'^-)^b{}_{\mu a},
\end{align*}
for $(\Omega'^\pm)^b{}_{\mu a}=(\Omega^\pm)^b{}_{\mu a}+\delta^b{}_{a}f^{-1}\partial_\mu f$.  If $f>0$ it is possible to rescale any set $\{\rho_a\}$ of vector fields whilst still satisfying the generalised Killing equations \eqref{Ecompatibility} by modifying the connection.  The generalised Killing equation is satisfied trivially if the vector fields $\{\rho_a \}$ are identically zero. We note that we cannot use a partition of unity type construction to globalise the vector fields used in the local gauged solutions by setting $\{\rho_a \}$ to zero outside of $U$.  It is clear that in the limit  $f\rightarrow 0$ we have $f^{-1}\partial_\mu f=\partial_\mu\ln(f)\rightarrow\infty$ and $\Omega'^\pm$ is unbounded.  There is no straightforward way to extend local gaugings to the entire manifold.
\section{Outlook and conclusion}\label{Sec:Outlook}
This paper examined a proposal for gauging non-linear sigma models with respect to a Lie algebroid action; integrability of local Lie algebroid actions to global Lie groupoid actions was also discussed.  The general conditions for gauging a non-linear sigma model with a set of involutive vector fields were given (Theorems \ref{Th:GaugeThrm}--\ref{Thrm:GaugeArbitrary}). It was shown that it is always possible to find a set of vector fields which will (locally) admit a Lie algebroid gauging.  Furthermore, the gauging process is not unique; if the vector fields span the tangent space of the manifold, there is a free choice of a flat connection (Corollary \ref{Thrm:FlatConnectionChoice}). 

The real restriction on the Lie algebroid gauging procedure is an understanding of when the gauged action is equivalent to the original action. In special cases it can be shown that the field $A$ is gauge equivalent to zero when it satisfies a Lie algebroid Maurer--Cartan equation (Section \ref{Sec:RecoverUngauged}). In these special cases the Lie algebroid is in fact locally equivalent to a Lie algebra and the proposal is locally equivalent to non-abelian gauging.  

The biggest open question surrounding Lie algebroid gauging is a better understanding of the gauge orbits of $A$.  It is essential to have a clear understanding of when $A$ is in the gauge orbit of zero and hence when the gauged action is equivalent to the ungauged action.  It is known that when $\Omega^+=\Omega^-$ the Lie algebroid gauging procedure is equivalent (locally) to non-abelian gauging.  It would be interesting to know if the more general Poisson--Lie T-duality could be incorporated in a Lie algebroid gauging based T-duality procedure when $\Omega^+\neq \Omega^-$. 
\section*{Acknowledgements}
KW would like to thank P. Bouwknegt, M. Bugden, and C. Klim\v{c}\'ik for many useful discussions on non-isometric T-duality which motivated the present work. This research was partially supported by the Australian Government through the Australian Research Council's Discovery Projects funding scheme (projects DP150100008 and DP160101520).  KW was also partially supported by the Mathematical Sciences Institute at the Australian National University through an MSI Kick-start fellowship.  

\appendix
\section{Proof of Lemma \ref{Lemma:Curvature}}\label{Sec:Appendix}
Using 
\begin{align}
(R_\nabla)^d{}_{abc}=2\rho^\mu_{[a|}\partial_\mu\Omega^d{}_{|b]c}+2\Omega^d{}_{[a|e}\Omega^e{}_{|b]c}-C^e{}_{ab}\Omega^d{}_{ec}.
\end{align}
and taking 
\begin{align}
({}^Q\Omega^+)^b{}_{ac}=(\sfE^{-1})^{bd}\rho^\mu_a\partial_\mu\sfE_{dc}-\sfE_{dc}({}^Q\Omega)^d{}_{ae}(\sfE^{-1})^{be}.
\end{align}

We wish to calculate the curvature of ${}^Q\nabla^+$ in terms of ${}^Q\Omega^-$. The connection coefficients ${}^Q\Omega^+$ consist of two terms
\begin{align*}
(\Omega_1)^b{}_{ac}=(\sfE^{-1})^{bd}\rho^\mu_a\partial_\mu\sfE_{dc},\quad (\Omega_2)^b{}_{ac}=-\sfE_{dc}({}^Q\Omega^-)^d{}_{ae}(\sfE^{-1})^{be}. 
\end{align*}  
Neither term individually defines a connection.
Note that 
\begin{subequations}\label{Eq:CurvSum}
	\begin{align}
	(R_{{}^Q\nabla^+})^d{}_{abc}=&2\rho^\mu_{[a|}\partial_\mu(\Omega_1)^d{}_{|b]c}+2(\Omega_1)^d{}_{[a|e}(\Omega_1)^e{}_{|b]c}-C^e{}_{ab}(\Omega_1)^d{}_{ec}\label{Eq:CurmSum1}\\
	&+2\rho^\mu_{[a|}\partial_\mu(\Omega_2)^d{}_{|b]c}+2(\Omega_2)^d{}_{[a|e}(\Omega_2)^e{}_{|b]c}-C^e{}_{ab}(\Omega_2)^d{}_{ec}\label{Eq:CurmSum2}\\
	&+2(\Omega_1)^d{}_{[a|e}(\Omega_2)^e{}_{|b]c}+2(\Omega_2)^d{}_{[a|e}(\Omega_1)^e{}_{|b]c}\label{Eq:CurmSum3}
	\end{align} 
\end{subequations}
We work this out line by line.  First consider \eqref{Eq:CurmSum1}:
\begin{align*}
2\rho^\mu_{[a|}\partial_\mu(\Omega_1)^d{}_{|b]c}=&2\rho^\mu_{[a|}\partial_\mu((\sfE^{-1})^{df}\rho^\nu_{|b]}\partial_\nu\sfE_{fc})\\
=&2\rho^\mu_{[a|}(\partial_\mu(\sfE^{-1})^{df})\rho^\nu_{|b]}\partial_\nu\sfE_{fc}+(\sfE^{-1})^{df}(2\rho^\mu_{[a|}\partial_\mu\rho^\nu_{|b]})\partial_\nu\sfE_{fc}\\
=&2\rho^\mu_{[a|}(\partial_\mu(\sfE^{-1})^{df})\sfE_{fg}(\sfE^{-1})^{gh}\rho^\nu_{|b]}\partial_\nu\sfE_{hc}+(\sfE^{-1})^{df}C^e{}_{ab}\rho^\nu_e\partial_\nu\sfE_{fc}\\
=&-2(\sfE^{-1})^{df}\rho^\mu_{[a|}(\partial_\mu\sfE_{fg})(\sfE^{-1})^{gh}\rho^\nu_{|b]}\partial_\nu\sfE_{hc}+C^e{}_{ab}(\sfE^{-1})^{df}\rho^\nu_e\partial_\nu\sfE_{fc}\\
=&-2(\Omega_1)^d{}_{[a|e}(\Omega_1)^e{}_{|b]c}+C^e{}_{ab}(\Omega_1)^d{}_{ec}.
\end{align*}
We conclude that \eqref{Eq:CurmSum1} is zero.

Next consider \eqref{Eq:CurmSum2}:
\begin{align*}
2\rho^\mu_{[a|}\partial_\mu(\Omega_2)^d{}_{|b]c}=&-2\rho^\mu_{[a|}\partial_\mu(\sfE_{ec}({}^Q\Omega^-)^e{}_{|b]f}(\sfE^{-1})^{df})\\
=&-2\rho^\mu_{[a|}(\partial_\mu\sfE_{ec})({}^Q\Omega^-)^e{}_{|b]f}(\sfE^{-1})^{df}-\sfE_{ec}(2\rho^\mu_{[a|}\partial_\mu({}^Q\Omega^-)^e{}_{|b]f})(\sfE^{-1})^{df}\\
&+2(\sfE_{ec})({}^Q\Omega^-)^e{}_{[b|f}\rho^\mu_{|a]}\partial_\mu(\sfE^{-1})^{df},
\end{align*}
\begin{align*}
2(\Omega_2)^d{}_{[a|e}(\Omega_2)^e{}_{|b]c}-C^e{}_{ab}(\Omega_2)^d{}_{ec}=&2\sfE_{ge}({}^Q\Omega^-)^g{}_{[a|f}(\sfE^{-1})^{df}\sfE_{ic}({}^Q\Omega^-)^i{}_{|b]h}(\sfE^{-1})^{eh}\\
&+C^e{}_{ab}\sfE_{gc}({}^Q\Omega^-)^g{}_{ef}(\sfE^{-1})^{df}\\
=&(\sfE^{-1})^{df}2({}^Q\Omega^-)^h{}_{[a|f}({}^Q\Omega^-)^i{}_{|b]h}\sfE_{ic}+C^e{}_{ab}\sfE_{gc}({}^Q\Omega^-)^g{}_{ef}(\sfE^{-1})^{df}\\
=&-\sfE_{ec}(2({}^Q\Omega^-)^e{}_{[a|h}({}^Q\Omega^-)^h{}_{|b]f}-C^h{}_{ab}({}^Q\Omega^-)^e{}_{hf})(\sfE^{-1})^{df}.
\end{align*}
We conclude that \eqref{Eq:CurmSum2} is equal to
\begin{subequations}
	\begin{align}
	\eqref{Eq:CurmSum2}=&-\sfE_{ec}(R_{{}^Q\nabla^-})^e{}_{abf}(\sfE^{-1})^{df}\\
	&-2\rho^\mu_{[a|}(\partial_\mu\sfE_{ec})({}^Q\Omega^-)^e{}_{|b]f}(\sfE^{-1})^{df}+2\sfE_{ec}({}^Q\Omega^-)^e{}_{[b|f}\rho^\mu_{|a]}\partial_\mu(\sfE^{-1})^{df}.\label{Eq:RemainderTerms}
	\end{align}
\end{subequations}
Now we calculate the terms of \eqref{Eq:CurmSum3}:
\begin{align*}
2(\Omega_1)^d{}_{[b|e}(\Omega_2)^e{}_{|a]c}=&-2(\sfE^{-1})^{df}\rho^\mu_{[b|}(\partial_\mu \sfE_{fe})\sfE_{hc}({}^Q\Omega^-)^h{}_{|a]g}(\sfE^{-1})^{eg}\\
=&2(\rho^\mu_{[b|}(\partial_\mu\sfE^{-1})^{df}) \sfE_{fe}\sfE_{hc}({}^Q\Omega^-)^h{}_{|a]g}(\sfE^{-1})^{eg}\\
=&2(\rho^\mu_{[b|}(\partial_\mu\sfE^{-1})^{df}) \sfE_{hc}({}^Q\Omega^-)^h{}_{|a]f}\\
=&-2\sfE_{ec}({}^Q\Omega^-)^e{}_{[a|f}\rho^\mu_{|b]}\partial_\mu(\sfE^{-1})^{df}.
\end{align*}
This cancels the second term of \eqref{Eq:RemainderTerms}.

\begin{align*}
2(\Omega_2)^d{}_{[a|e}(\Omega_1)^e{}_{|b]c}=&-2\sfE_{he}({}^Q\Omega^-)^h{}_{[a|f}(\sfE^{-1})^{df}(\sfE^{-1})^{eg}\rho^\mu_{|b]}\partial_\mu \sfE_{gc}\\
=&-2({}^Q\Omega^-)^h{}_{[a|f}(\sfE^{-1})^{df}\rho^\mu_{|b]}\partial_\mu \sfE_{hc}\\
=&2\rho^\mu_{[a|}(\partial_\mu \sfE_{ec})({}^Q\Omega^-)^e{}_{|b]f}(\sfE^{-1})^{df}.
\end{align*}
This term cancels with the first term of \eqref{Eq:RemainderTerms}.

Finally we conclude that
\begin{align}
(R_{{}^Q\nabla^+})^d{}_{abc}=-\sfE_{ec}(R_{{}^Q\nabla^-})^e{}_{abf}(\sfE^{-1})^{df}.
\end{align}


\end{document}